\newtheorem{theorem}{Theorem}[section]
\newtheorem{proposition}[theorem]{Proposition}
\newtheorem{lemma}[theorem]{Lemma}
\newtheorem{corollary}[theorem]{Corollary}
\theoremstyle{definition}
\newtheorem{definition}[theorem]{Definition}
\theoremstyle{remark}
\newtheorem{remark}[theorem]{Remark}
\numberwithin{equation}{section}
\newcommand{\C}{\mathbb{C}}
\renewcommand{\epsilon}{\varepsilon}
\newcommand{\R}{\mathbb{R}}
\DeclareMathOperator{\dist}{dist}
\DeclareMathOperator{\im}{Im}
\DeclareMathOperator{\sgn}{sgn}
\DeclareMathOperator{\Tr}{Tr}
\DeclareMathOperator{\tr}{Tr}
\begin{document}

\title{The spectral shift function and Levinson's theorem for quantum star graphs}

\author{Semra Demirel}

\address{Semra Demirel, University of Stuttgart, Department of Mathematics, Pfaffenwaldring 57, D-70569 Stuttgart}
\email{Semra.Demirel@mathematik.uni-stuttgart.de}

\begin{abstract}
We consider the Schr\"odinger operator on a star shaped graph with $n$ edges joined at a single vertex. We derive an expression for the trace of the difference of the perturbed and unperturbed resolvent in terms of a Wronskian. This leads to representations for the perturbation determinant and the spectral shift function, and to an analog of Levinson's formula.
\end{abstract}

\maketitle

\section{Introduction and main results}

\subsection{Introduction}
This article focuses on the study of the spectral shift function and a Levinson theorem for Schr\"odinger operators on star shaped graphs. Quantum mechanics on graphs has a long history in physics and physical chemistry \cite{Pau,RS}, but recent progress in experimental solid state physics has renewed attention on them as idealized models for thin domains. A large literature on the subject has arisen and we refer, for instance, to the bibliography given in \cite{BK,EKKST}.

A \emph{star graph} is a metric graph $\Gamma$ with a single vertex in which a finite number $n\geq 2$ of edges $e_j$ are joined. We assume throughout that all edges $e_j$ are infinite and we identify $e_j=[0,\infty)$. We assume that the potential $V$ is a real-valued function on $\Gamma$ satisfying
\begin{equation}\label{assumhalb}
\int_{e_j} |V_j(x_j)| \,dx_j < \infty \quad \mbox{for all} \  1\leq j \leq n,
\end{equation}
where we denoted the restriction of $V$ to the edge $e_j$ by $V_j(x_j)=V(x)|_{e_j}$. Under this condition, we can define the Schr\"odinger operator
\begin{equation}\label{staroperator}
H \psi :=   - \psi'' + V \psi
\end{equation}
with continuity and Kirchhoff vertex conditions
\begin{equation}\label{kbc}
\psi_1(0) = \ldots = \psi_n(0) =: \psi(0), \quad \sum_{j=1}^n \psi'_j (0) = 0,
\end{equation}
as a self-adjoint operator in the Hilbert space $ L_2(\Gamma) = \oplus^n_{j=1} L_2(e_j)$. In \eqref{kbc} we denoted by $\psi_j$ the restriction of $\psi$ to the edge $e_j$. More precisely, we define the operator $H$ via the closed quadratic form
$$
h[\phi] :=	\int_\Gamma |\phi'(x)|^2 \,dx +	\int_\Gamma V(x)|\phi(x)|^2 \,dx\,,
$$
with form domain $d(h) = H^1(\Gamma)$ consisting of all continuous functions $\phi$ on $\Gamma$ such that $\phi_j \in H^1(e_j)$ for every $j$. If $V$ is sufficiently regular in a neighborhood of the vertex, then functions $\phi$ in the operator domain of $H$ satisfy the Kirchhoff vertex condition in \eqref{kbc}; otherwise this condition has to be interpreted in a generalized sense.

Our two main results are formulas for the spectral shift function and the perturbation determinant of $H$ with respect to the unperturbed operator $H_0$ (which is defined similarly as $H$, but with $V\equiv 0$) and an analog of Levinson's theorem. Special attention will be paid to the existence or absence of zero energy resonances.

There are several motivations for this study. The first one is the scattering theory of quantum graphs. While star graphs are certainly very special graphs, it is generally believed that they are a correct model example for a scattering process in the presence of a vertex. The direct and indirect scattering theory on star graphs has been studied in great detail in \cite{Ger} and \cite{Harm1}. Our results complement theirs and, in contrast to them, we advertise a more operator theoretic approach including, for instance, Fredholm determinants, trace class estimates and Kre{\v{\i}}n's resolvent formula.

A second motivation is a line of thought that goes back at least to Jost and Pais \cite{JoPa}; see also \cite{barry}, \cite{GeszMZ} and \cite{YOe}. In these works, a perturbation determinant, which is a Fredholm determinant in an infinite dimensional space, is shown to be equal to a much simpler determinant, typically in a finite dimensional space, such as a Wronski determinant. While such formulas appear in different set-ups, there seems to be no general method of knowing in advance the form of the `simpler determinant'. One of the achievements of this paper is to derive a new formula of this kind for a star graph.

A third motivation comes from the general interest in zero energy resonances because of their key role in several diverse problems of mathematical physics; for instance, the Efimov effect in many-body quantum mechanics \cite{Efi}, the time decay of wave functions \cite{JKato} and the convergence of `thick quantum graphs' \cite{Grie}, to name just a few. We also refer to the survey \cite{Boll, B}. In particular, we hope that our results will allow us to remove the non-resonance assumption in the recent dispersive estimates on star graphs \cite{MAN}; see also \cite{Weder} for similar bounds in the whole line case.

Finally, we note that the derivation of a Levinson theorem for a graph with a finite number of unbounded edges was mentioned as an open problem in \cite{childs} (who considered the discrete case). While the compact part of the graph still has to be better understood, our analysis explains how to deal with several unbounded edges and will be useful, we believe, in further developments in this direction.

\subsection{Main results}

To state our main result, namely a trace formula for the operator \eqref{staroperator} with vertex condition \eqref{kbc}, we need some notations.
By $H_{D,j}$ we denote the half-line Schr\"odinger operator with potential $V_j = V|_{e_j}$ and Dirichlet boundary condition at the origin.  The self-adjoint operator 
$$
H_{D,j} = -\frac{d^2}{dx_j^2} +V_j
$$
on $L_2(e_j)$ is associated with the quadratic form 
\begin{equation}\nonumber
h_{D,j}[\phi_j]:=  \int_{e_j} | \phi_j'(x_j) |^ 2 \,dx_j + \int_{e_j} V_j(x_j) | \phi_j(x_j) |^2  \,dx_j, \quad \phi_j \in H^{0,1}(e_j),
\end{equation}
\\
\noindent where the form domain is given by $d(h_{D,j})=H^{0,1}(e_j) = \{\phi_j \in H^{1}(e_j) :  \phi_j(0)=0 \}.$ 
If the condition \eqref{assumhalb} is satisfied, then the equation 
$$
-u'' + Vu = z u, \quad z =  \zeta^2
$$
has two particular solutions, the \textit{regular solution} $\varphi_j$ and the \textit{Jost solution} $\theta_j$. The first one is characterized by the conditions
$$
\varphi_j(0,\zeta) = 0 , \quad \varphi'_j(0,\zeta) = 1
$$
\noindent and the latter one by the asymptotics $ \theta_j(x,\zeta) = e^{ix\zeta} (1+o(1))$ as $|\zeta| \to \infty$. Both solutions are unique, see for instance \cite{Y}.
The \textit{Jost function} $w_j(\zeta) $ is defined as the Wronskian of the regular solution and the Jost solution and turns out to be $w_j(\zeta) = \theta_j(0,\zeta)$.

\vspace{1.5cm}
\begin{figure}[h!]
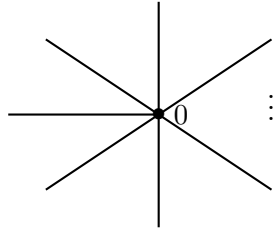

\qline(-2.0,0)(0,0)
\qline(0,1.5)(0,0)
\qline(1.5,-1.0)(0,0)
\qline(-1.5,1.0)(0,0)
\qline(-1.5,-1.0)(0,0)
\qline(1.5,1.0)(0,0)
\qline(0,-1.5)(0,0)
\rput(0.3,0){$0$}
\rput(1.5,0.2){\vdots}
\rput(0,0){$\bullet$}

\vspace*{2cm}
\caption{star graph $\Gamma$}
\end{figure}

Our first main result is 

\begin{theorem}\label{trfrstar}
Let $\Gamma$ be a star shaped graph and assume that \eqref{assumhalb} is satisfied for $1\leq j \leq n$. Then, for the Schr\"odinger operator \eqref{staroperator} on $L_2(\Gamma)$ with Kirchhoff vertex condition \eqref{kbc}, the following trace formula holds,
\begin{equation}\label{resolventendiffstern}
\tr \left(  (H_0 - \zeta^2)^{-1} - (H- \zeta^2)^{-1} \right) =  \frac{1}{2 \zeta} \frac {d}{d \zeta} \ln \left( \frac{K(\zeta)}{\zeta} \prod_{j=1}^n w_j(\zeta)  \right), \quad \im \zeta >0,
\end{equation}

\noindent where $K(\zeta) = \sum_{j=1}^n \theta'_j(0, \zeta) / \theta_j(0,\zeta)$ and $w_j(\zeta) = \theta_j(0,\zeta)$.
\end{theorem}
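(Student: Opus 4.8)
The plan is to write the trace on the left of \eqref{resolventendiffstern} as a telescoping sum of three resolvent differences, each trace class and with an explicit logarithmic derivative as its trace. Let $H^D := \bigoplus_{j=1}^n H_{D,j}$ be the fully decoupled operator with Dirichlet conditions at the vertex, let $H^0_{D,j}$ be the free half-line Dirichlet operator and $H_0^D := \bigoplus_{j=1}^n H^0_{D,j}$, and recall that $H_0$ is the free Kirchhoff operator on $\Gamma$. With $z = \zeta^2$ and $\im\zeta > 0$ I decompose
\begin{multline*}
(H_0 - z)^{-1} - (H - z)^{-1} = \big[(H_0 - z)^{-1} - (H_0^D - z)^{-1}\big] \\ + \big[(H_0^D - z)^{-1} - (H^D - z)^{-1}\big] + \big[(H^D - z)^{-1} - (H - z)^{-1}\big].
\end{multline*}
The middle bracket is $\bigoplus_{j=1}^n\big[(H^0_{D,j}-z)^{-1} - (H_{D,j}-z)^{-1}\big]$, and under \eqref{assumhalb} each summand is trace class: expand it by the second resolvent identity and factor $V_j = |V_j|^{1/2}\sgn(V_j)|V_j|^{1/2}$, using that $(H^0_{D,j}-z)^{-1}|V_j|^{1/2}$ and $|V_j|^{1/2}(H_{D,j}-z)^{-1}$ are Hilbert--Schmidt because each half-line Green's function is square integrable in one variable with an $L_2$-bound uniform in the other. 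The first and third brackets will be shown to have rank one.

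For the outer brackets I would use Kre{\v{\i}}n's resolvent formula. The operators $H$ and $H^D$ are self-adjoint extensions of one and the same symmetric operator $S$ of deficiency indices $(1,1)$, namely $-d^2/dx^2 + V$ restricted to the continuous functions $\psi$ with $\psi_j \in H^2(e_j)$, $\psi(0)=0$ and $\sum_j \psi_j'(0)=0$ (note $\dom S$ has codimension one both in $\dom H$ and in $\dom H^D$). A boundary triple $(\C,\Gamma_0,\Gamma_1)$ for $S^*$ is $\Gamma_0 u = u(0)$ (the common vertex value), $\Gamma_1 u = \sum_j u_j'(0)$; the abstract Green identity follows by integrating by parts on each edge and using continuity at the vertex, and $H^D = \ker\Gamma_0$ while the Kirchhoff operator is $H = \ker\Gamma_1$. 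The defect space $\ker(S^*-z)$ is spanned by the $u_z$ with $(u_z)_j(x) = \theta_j(x,\zeta)/\theta_j(0,\zeta)$, so the Weyl function is the scalar
\[
M(z) = \frac{\Gamma_1 u_z}{\Gamma_0 u_z} = \sum_{j=1}^n \frac{\theta_j'(0,\zeta)}{\theta_j(0,\zeta)} = K(\zeta),
\]
and the $\gamma$-field is $\gamma(z)t = \big(t\,\theta_j(\cdot,\zeta)/\theta_j(0,\zeta)\big)_j$. Kre{\v{\i}}n's formula gives $(H-z)^{-1} - (H^D-z)^{-1} = -\gamma(z)M(z)^{-1}\gamma(\bar z)^*$, a rank one operator, and with the standard identity $\gamma(\bar z)^*\gamma(z) = M'(z)$ and cyclicity of the trace,
\[
\tr\big[(H^D-z)^{-1} - (H-z)^{-1}\big] = \frac{M'(z)}{M(z)} = \frac{d}{dz}\ln K(\zeta).
\]
The first bracket is the same computation with $V\equiv 0$: there $\theta^0_j(x,\zeta) = e^{ix\zeta}$, the free Weyl function is $M_0(z) = \sum_j i\zeta = n\,i\zeta$, and $\tr\big[(H_0-z)^{-1} - (H_0^D-z)^{-1}\big] = -\frac{d}{dz}\ln M_0(z) = -\frac{d}{dz}\ln\zeta$.

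For the middle bracket I would invoke the classical Jost--Pais formula on the half-line: for a Dirichlet Schr\"odinger operator with $L_1$ potential the perturbation determinant $\det\big((H_{D,j}-z)(H^0_{D,j}-z)^{-1}\big)$ --- a genuine Fredholm determinant by the trace-class remark above --- equals the Jost function $w_j(\zeta) = \theta_j(0,\zeta)$, so $\tr\big[(H^0_{D,j}-z)^{-1} - (H_{D,j}-z)^{-1}\big] = \frac{d}{dz}\ln w_j(\zeta)$ (this can also be recovered directly from the explicit half-line Green's function). Adding the three contributions,
\[
\tr\big[(H_0-z)^{-1} - (H-z)^{-1}\big] = -\frac{d}{dz}\ln\zeta + \sum_{j=1}^n\frac{d}{dz}\ln w_j(\zeta) + \frac{d}{dz}\ln K(\zeta) = \frac{d}{dz}\ln\Big(\frac{K(\zeta)}{\zeta}\prod_{j=1}^n w_j(\zeta)\Big),
\]
and since $z = \zeta^2$ gives $\frac{d}{dz} = \frac{1}{2\zeta}\frac{d}{d\zeta}$, this is \eqref{resolventendiffstern}. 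The step I expect to be the main obstacle is the bookkeeping around Kre{\v{\i}}n's formula: setting up the one-dimensional boundary triple at the vertex correctly and verifying that the Weyl function is precisely $K(\zeta)$ (and $n\,i\zeta$ in the free case) --- this is exactly where the factor $K(\zeta)/\zeta$ in \eqref{resolventendiffstern} originates. The remaining ingredients --- the trace-class estimates under hypothesis \eqref{assumhalb} alone, and the Jost--Pais identity --- are standard.
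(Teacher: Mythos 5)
Your proposal is correct and shares the paper's overall architecture --- decouple at the vertex with Dirichlet conditions, apply the Jost--Pais formula edge by edge, and account for the rank-one Kre{\v{\i}}n corrections whose Weyl function is $K(\zeta)$ (resp.\ $ni\zeta$ in the free case) --- but it executes the crucial step differently. The paper computes the trace of the rank-one correction by hand: it integrates $\theta_j^2(x,\zeta)/\theta_j^2(0,\zeta)$ over each edge using the Wronskian-type identity $2\zeta u v = (u'\dot v - u\dot v')'$, first for compactly supported $V_j$ and then by a density argument, obtaining $\sum_j \int_0^\infty \theta_j^2/\theta_j^2(0)\,dx = \dot K(\zeta)/(2\zeta)$. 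You instead invoke the abstract boundary-triple identity $\gamma(\bar z)^*\gamma(z) = M'(z)$, which packages that same integral as a general fact; indeed, writing out $\gamma(\bar z)^*\gamma(z)$ for your $\gamma$-field reproduces exactly the integral the paper evaluates, so the two computations are literally the same object viewed abstractly versus concretely. What your route buys is a cleaner derivation with no compact-support approximation and a clearer conceptual origin for the factor $K(\zeta)/\zeta$; what the paper's route buys is the explicit resolvent kernel \eqref{krein}, which is reused later (e.g.\ in Lemma \ref{conditioncheck}), and self-containedness. Two points you should still nail down. First, with $V_j$ merely in $L_1(e_j)$ the operator domain of $S^*$ is not $H^2$ on each edge, so the boundary triple and the identity $M'(z)=\gamma(\bar z)^*\gamma(z)$ must be justified in this low-regularity setting --- the safe way is to verify Kre{\v{\i}}n's formula and the trace of the rank-one term directly from the explicit kernels, which lands you back at the paper's Wronskian computation. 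Second, your telescoping requires $(H_0-z)^{-1}-(H_0^D-z)^{-1}$ and $(H^D-z)^{-1}-(H-z)^{-1}$ separately, with two different Weyl functions; this is fine, but note the paper instead subtracts the two rank-one kernels inside a single integral, which is why its intermediate formula \eqref{r0} looks different from your three-bracket sum even though the totals agree.
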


\begin{remark}\label{Jost-Pais}
We note that identity \eqref{resolventendiffstern} is equivalent to the identity 
$$
\tr \left(  (H_0 - \zeta^2)^{-1} - (H- \zeta^2)^{-1} \right) =   \frac{1}{2 \zeta} \left( \sum_{j=1}^n \frac{ \frac{d}{d\zeta} w_j(\zeta)}{w_j(\zeta)} + \frac{\frac{d}{d\zeta} K(\zeta)}{K(\zeta)} - \frac{1}{\zeta} \right),
$$
which should be compared with the classical result \cite{ JoPa , BF}, see also \cite{Y, barry},
\begin{equation}\label{classicresult}
\tr \left( (H_{D,j,0} - \zeta^2)^{-1} - (H_{D,j}- \zeta^2)^{-1} \right) = \frac{\frac{d}{d\zeta} w_j(\zeta)}{2 \zeta w_j(\zeta)}.
\end{equation}

\end{remark}

From equation \eqref{resolventendiffstern}, we conclude in Section \ref{PDSSF} an explicit expression for the perturbation determinant $D(z)$ and the spectral shift function $\xi( \lambda; H, H_0)$. We recall that the spectral shift function can be characterized by the formula 
$$
\tr \left( f(H) -f(H_0) \right) = \int_{- \infty}^\infty \xi (\lambda; H, H_0) f'(\lambda) \,d\lambda ,
$$
for any $f \in C_0^\infty (\R)$ (together with the condition $\xi(\lambda; H,H_0)=0$ for $\lambda < \inf \sigma(H)$). An extension of this formula for a broader class of functions, as well as several equivalent definitions are discussed in Section $3$.

In Section \ref{levii} we study the low-energy asymptotics of $D(z)$ as $|z| \to 0$. This allows us to prove an analog of Levinson's formula for the star graph. 
We say that the operator $H$ on $L_2(\Gamma)$, given in \eqref{staroperator}, has a \emph{resonance} at $\zeta = 0$ if the equation $-u'' + Vu = 0$
has a non-trivial bounded solution satisfying the continuity and Kirchhoff conditions. By definition, the \emph{multiplicity of the resonance} is the dimension of the corresponding solution space.

\begin{theorem}\label{levinsonstar}
Assume that  
\begin{equation}\label{firstmoment}
\int_{e_j} (1+x) |V_j(x)| \,dx < \infty \quad \mbox{for all} \  1 \leq j \leq n,
\end{equation}
is satisfied and, if $\zeta =0$ is a resonance of multiplicity one, assume that 
\begin{equation}\label{secondmoment}
\int_{e_j} (1+x^2) |V_j(x)| \,dx < \infty \quad \mbox{for all} \ 1 \leq j \leq n.
\end{equation}
Then,
\begin{equation}
\lim_{\lambda \to 0+} \xi (\lambda) = - \left(N+ \frac{m-1}{2}\right),
\end{equation}
where $N$ is the number of negative eigenvalues of $H$ and where $m \geq 1$ is the multiplicity if $\zeta =0$ is a resonance and $m=0$ if $\zeta =0$ is not a resonance.
\end{theorem}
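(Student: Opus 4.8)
The plan is to combine the trace formula of Theorem~\ref{trfrstar} with the standard machinery relating the spectral shift function to the perturbation determinant, and then to extract the low-energy behaviour from the analytic structure of the Jost functions $w_j(\zeta)$ and the quantity $K(\zeta)$. First I would recall from Section~\ref{PDSSF} the identity $D(z) = \frac{K(\zeta)}{\zeta}\prod_{j=1}^n w_j(\zeta)$ (up to a normalizing constant fixed by the $|\zeta|\to\infty$ asymptotics), so that $\xi(\lambda) = -\frac{1}{\pi}\lim_{\epsilon\to 0+}\arg D(\lambda+i\epsilon)$ for $\lambda>0$. Thus $\lim_{\lambda\to 0+}\xi(\lambda)$ is governed by $\lim_{\zeta\to 0,\ \im\zeta>0}\arg\bigl(\zeta^{-1}K(\zeta)\prod_j w_j(\zeta)\bigr)$, and the whole problem reduces to a careful expansion of each factor near $\zeta=0$ together with an argument-principle count.

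Second, I would assemble the needed low-energy expansions of the half-line Jost solutions. Under \eqref{firstmoment} each $w_j(\zeta)=\theta_j(0,\zeta)$ extends continuously to $\im\zeta\ge 0$ including $\zeta=0$; writing $w_j(0)=\theta_j(0,0)$, the half-line operator $H_{D,j}$ has a zero-energy (Dirichlet) resonance precisely when $w_j(0)=0$, and under \eqref{secondmoment} one has the refined expansion $w_j(\zeta)=w_j(0)+i\zeta\,\dot w_j(0)+o(\zeta)$ with $\dot w_j(0)$ computable as a Wronskian-type integral. The combination $\zeta^{-1}K(\zeta)\prod_j w_j(\zeta) = \zeta^{-1}\sum_j \theta_j'(0,\zeta)\prod_{k\ne j}\theta_k(0,\zeta)$; this is the Wronskian at the vertex of the Kirchhoff-matched solution, which is exactly the object whose vanishing at $\zeta=0$ detects a resonance of $H$ on $\Gamma$ in the sense defined before the theorem. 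So I would show: (i) $D(z)$ has a finite nonzero limit $D(0)\ne 0$ iff $\zeta=0$ is not a resonance of $H$ ($m=0$); (ii) if $\zeta=0$ is a resonance, the order of vanishing of $\zeta^{-1}K(\zeta)\prod_j w_j(\zeta)$ at $\zeta=0$, suitably interpreted via the $i\zeta$-expansions, is tied to $m$. Concretely one distinguishes the generic case where all $w_j(0)\ne 0$ (so the resonance, if present, comes from $K(\zeta)$ having a pole cancelling the $\zeta^{-1}$, i.e. $\sum_j \theta_j'(0,0)/\theta_j(0,0)=0$, forcing $m=1$) from the degenerate case where some $w_j(0)=0$ (several edges resonant, possibly $m>1$), which is why the extra moment condition \eqref{secondmoment} is only imposed when $m=1$ — there the leading term is linear in $\zeta$ and one needs the next coefficient.

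Third, having pinned down $D(0)$ (or the leading coefficient of $D(z)$ as $z\to 0$) as a real nonzero number, I would invoke the argument principle / continuity of $\xi$ on $(0,\infty)\setminus\{0\}$ together with the known jump behaviour of $\xi$. The function $\xi(\lambda)$ is constant equal to $-N$ just above the bottom of the essential spectrum minus contributions from eigenvalues; each negative eigenvalue of $H$ contributes a unit jump, accounting for the term $-N$. The half-integer correction $-(m-1)/2$ arises because, when $\zeta=0$ is a resonance, $D(z)$ vanishes like a \emph{half-integer} power of $z$ as $z\to 0$ along the physical sheet (equivalently, $D(\lambda+i0)$ picks up an extra phase of $(m-1)\pi/2$... more precisely the resonance contributes a boundary phase of $\pi/2$ per ``excess'' resonant dimension), exactly as in the classical half-line Levinson theorem where a resonance shifts the count by $1/2$. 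I would make this precise by writing $D(z)=z^{(m-1)/2}\,c_0(1+o(1))$ with $c_0\ne 0$ real in the resonant case and $D(0)=c_0\ne 0$ in the non-resonant case, and reading off $\arg D(\lambda+i0)$ as $\lambda\to 0+$.

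The main obstacle I expect is the bookkeeping in the degenerate resonant case $m\ge 2$: one must show that the multiplicity of the resonance of $H$ on $\Gamma$ (the dimension of the space of bounded Kirchhoff-matched zero-energy solutions) equals exactly the order-of-vanishing count coming out of $\zeta^{-1}K(\zeta)\prod_j w_j(\zeta)$, and that the various $w_j(\zeta)$ that vanish at $\zeta=0$ do so to first order with nonzero derivative (this is where \eqref{firstmoment}, and in the $m=1$ borderline case \eqref{secondmoment}, enter). Relating the analytic order of vanishing of this vertex-Wronskian to the dimension of the solution space requires checking that the matching conditions \eqref{kbc} impose independent linear constraints, i.e. computing the rank of an $n\times n$ matrix built from $\theta_j(0,0)$ and $\theta_j'(0,0)$; I would handle this by a direct linear-algebra argument, splitting edges into resonant ($w_j(0)=0$) and non-resonant ones and counting dimensions of the continuity/current-conservation constraints on each block. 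The remaining steps — the high-energy normalization of $D$, continuity of $\xi$ away from $0$, and the eigenvalue jump count — are routine given the cited classical results and the trace formula already proved.
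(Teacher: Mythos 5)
Your overall strategy is the same as the paper's: establish the low\-/energy asymptotics of $D(z)=\frac{K(\zeta)}{in\zeta}\prod_j w_j(\zeta)$ in terms of the resonance multiplicity $m$, relate the order of vanishing at $\zeta=0$ to the dimension of the space of bounded Kirchhoff\-/matched zero\-/energy solutions by the case analysis $M=\#\{j:w_j(0)=0\}$ versus $K(0)$ (this is the paper's Lemma~\ref{M} and Proposition~\ref{proposition}), and then run an argument\-/principle count over a contour enclosing the $N$ eigenvalues, with the small semicircle around $\zeta=0$ producing the half\-/integer correction. However, your step (i) is wrong, and the error propagates to the final count in the non\-/resonant case. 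You claim that $D(z)$ has a finite nonzero limit iff $\zeta=0$ is \emph{not} a resonance ($m=0$), and you repeat this at the end (``$D(0)=c_0\neq 0$ in the non\-/resonant case''). In fact, if $M=0$ and $K(0)\neq 0$ (non\-/resonant), then $D(z)=\frac{K(\zeta)}{in\zeta}\prod_j w_j(\zeta)\sim \frac{K(0)\prod_j w_j(0)}{in}\,\zeta^{-1}\to\infty$; and if $M=1$ (the other non\-/resonant scenario) the simple pole of $K$ is cancelled by the simple zero of $w_1$, again leaving $D\sim c\zeta^{-1}$. The finite nonzero limit occurs precisely when $m=1$ (e.g.\ for $V\equiv 0$, where the constant function is a resonance of multiplicity one and $D\equiv 1$). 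Your intuition is calibrated to the half\-/line, where $D(z)=w(\zeta)$ and non\-/resonance does give $D(0)\neq 0$; on the star graph the extra factor $K(\zeta)/\zeta$ changes the bookkeeping. If you carry your claim (i) through the phase count you obtain $\lim_{\lambda\to 0+}\xi(\lambda)=-N$ for $m=0$, whereas the theorem asserts $-N+\tfrac12$; the correct uniform statement is $D(z)=c\,\zeta^{m-1}(1+o(1))$, $c\neq 0$, valid for all $m\geq 0$ including $m=0$ (your displayed formula $D(z)=z^{(m-1)/2}c_0(1+o(1))$ is actually consistent with this if you do \emph{not} exempt the case $m=0$ from it).

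Two smaller points. First, Theorem~\ref{meins} gives $\xi(\lambda)=+\pi^{-1}\lim_{\varepsilon\to 0+}\arg D(\lambda+i\varepsilon)$, not $-\pi^{-1}$; with your sign the final formula would come out with the wrong sign unless compensated elsewhere. Second, in the case $M=0$ you write that a resonance ``comes from $K(\zeta)$ having a pole cancelling the $\zeta^{-1}$''; you mean a \emph{zero} of $K$ at $\zeta=0$ (a pole of $K$ occurs when some $w_j(0)=0$, i.e.\ $M\geq 1$). Aside from these issues, the remaining ingredients you list --- the linear\-/algebra identification of $m$ with $M-1$ when $M\geq 2$, the role of \eqref{secondmoment} only in the $m=1$, $M=0$ case (where one needs $\dot K(0)=i\sum_j\theta_j(0,0)^{-2}\neq 0$ via the Wronskian identity \eqref{trabzon2}), the symmetry $\eta(-k)=-\eta(k)$ implicit in your phase count, and the vanishing of the contribution of the large semicircle --- all match the paper's proof.
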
 

\begin{remark}
We know from Bargmann's bound that $N < \infty$ if \eqref{firstmoment} is satisfied, \cite{Barg}. We also know that $\lim_{\lambda \to 0-} \xi(\lambda) = -N$, which is an easy consequence of the definition of the spectral shift function.
\end{remark}

\section{A Trace formula for Star Graphs}\label{ATf}

In this section, our goal is to prove a trace formula for star graphs. More precisely, we will find an expression for $\tr (R(z) - R_0(z) )$ in terms of the Jost solutions $\theta_j$ on the edges $e_j$. Here and in the following we write $R(z) = (H-z)^{-1}$ and $R_0(z) = (H_0 - z)^{-1}$ for the perturbed and unperturbed resolvent, respectively.
When deriving an expression for the resolvent $R(z)$, we will make use of Kre{\v{\i}}n's formula for which we refer to \cite{AGHH} and, in particular, to an article by Exner \cite{Ex} where this formula was used in a similar context. Thereby, we need to decouple the operator $H$ which we achieve by imposing Dirichlet vertex conditions on each edge $e_j$, i.e.,
$\psi_j(0) = 0$ for all $ 1 \leq j \leq n.$ Then the operator \eqref{staroperator} is decoupled and the half-lines are disconnected. We denote the decoupled operator by 
$$ 
H_{\infty} = \bigoplus_{j=1}^n H_{D,j}
$$
and its resolvent by $R_{\infty}(z) = (H_{\infty} - z)^{-1}$.
In what follows, we will skip for simplicity the indices at the coordinates and use the notation $\psi_j( x):= \psi_j(x_j) ,\ 1 \leq j \leq n,$ for a function defined on the edge $e_j$ of $\Gamma.$ 
\vspace{0.3cm}

\begin{proof}[Proof of Theorem \ref{trfrstar}]
It is a well-known fact, see e.g. \cite{Y}, that under assumption \eqref{assumhalb} for all $z = \zeta^2$ such that $\im z \neq 0$ and $w_j(\zeta) \neq 0,$ the resolvent $R_{D,j}(z) = (H_{D,j} -  z)^{-1}$ is an integral operator with kernel
$$
R_{D,j}(x,y; z) := \dfrac{\varphi_j(x, \zeta) \theta_j (y,\zeta)}{w_j(\zeta)}, \quad x \leq y,\ \ \zeta = z^{1/2},
$$
and $R_{D,j}(x,y; z) = R_{D,j}(y,x; z).$
Hence, the resolvent $R_\infty (z)$ is a matrix  integral operator with the kernel 
$$
R^{\infty}_{j, \ell} (x,y; z) := \delta_{j, \ell} R_{D,j}(x,y,z), \quad 1 \leq j, \ell \leq n.
$$

Having the resolvent kernel $R^{\infty}_{j, \ell} $ of the decoupled operator $H_{\infty}$, we can use Kre{\v{\i}}n's formula \cite{AGHH} to determine the kernel of the resolvent $R (z).$ Let $\rho(H)$ be the resolvent set of the operator $H$ and $\rho(H_0)$ the resolvent set of $H_0$.
The formula states that for any $\zeta$, such that $\im \zeta \geq 0$ and $z= \zeta^2 \in \rho(H_{\infty}) \cap \rho(H)$, the resolvent $R(z)$ is a matrix integral operator with kernel $ R_{j,\ell}(x,y; z) = R^{\infty}_{j, \ell} (x,y; z) + \lambda_{j \ell} \theta_j(x,\zeta) \theta_\ell (y,\zeta).$ In order to determine the coefficients $\lambda_{j \ell}$ we proceed as follows.
For any $f = (f_1(x), \ldots ,f_n(x) )^T \in L_2(\Gamma),$ the function $\psi(x) := \int  R(x,y; z) f(y) \,dy$ has to satisfy the equation $H \psi = \zeta^2 \psi +f$ and the Kirchhoff vertex condition. This leads to a system of $n$ linear equations for the coefficients $\lambda_{j \ell}.$ It turns out that $\lambda_{j \ell}= ( - K(\zeta) \theta_j(0, \zeta) \theta_{\ell}(0, \zeta))^{-1},$ with $K(\zeta) = \sum_{j=1}^n \theta'_j(0, \zeta) / \theta_j(0,\zeta)$, see also \cite{Ex}.
Thus,

\begin{equation}\label{krein}
R_{j,\ell}(x,y; z) := R^{\infty}_{j, \ell} (x,y; z) - \frac{\theta_j(x,\zeta) \theta_\ell (y,\zeta)}{K(\zeta) \theta_j(0, \zeta) \theta_{\ell}(0, \zeta)}.
\end{equation}

This representation allows us to compute $\tr (R(z) - R_0(z)).$ First, we note that the operator $R(z) - R_0(z)$ is a trace class operator. This can be seen as follows. As the quotient in \eqref{krein} is a perturbation of finite rank, we only have to show that the difference $R_\infty(z) - R_\infty^{(0)}(z)$ is trace class. Here $ R_\infty^{(0)}(z)$ is the resolvent of the unperturbed decoupled operator $H_{\infty}^{(0)} = \bigoplus_{j=1}^n \left( -d^2/dx^2 \right)$ on $\bigoplus_{j=1}^n L_2(e_j)$. Similarly, we denote by $R_{D,j}^{(0)}(z)$ the resolvent of the unperturbed operator $H_{D,j}^{(0)} = -d^2/dx^2$ on $L_2(e_j)$. Under condition \eqref{assumhalb} the operator $\sqrt{|V_j|} \left( R_{D,j}^{(0)}(z) \right)^{\alpha}$ is Hilbert-Schmidt for all $\alpha > 1/4$ and all $1 \leq j \leq n$, as can be easily checked (see e.g. \cite{Y}, Lemma 4.5.1). Hence, the Birman-Schwinger operator $ \sqrt{|V_j|} R_{D,j}^{(0)}(z) \sqrt{V_j}$, with $ \sqrt{V_j} := \sgn (V_j)  \sqrt{|V_j|}$, is trace class and has for $z \in \rho(H_{D,j})$ no eigenvalue $-1$. Thus, the following resolvent identity for the half-line Schr\"odinger operator holds,
$$
R_{D,j}(z) -R_{D,j}^{(0)}(z) = - R_{D,j}^{(0)} (z) \sqrt{V_j} \left( \mathbbm{1} + \sqrt{|V_j|} R_{D,j}^{(0)}(z) \sqrt{V_j} \right)^{-1} \sqrt{|V_j|} R_{D,j}^{(0)}(z).
$$
It follows from this resolvent identity that $R_\infty(z) - R_\infty^{(0)}(z)$ is a trace class operator. In view of \eqref{krein} it follows that also $R(z) - R_0(z)$ is a trace class operator and
\begin{eqnarray} \label{r0}
\tr (R(z) - R_0(z)) 
&=& \sum_{j=1}^n \int_{e_j} \left( R_{D,j}(x,x,z) - R^{(0)}_{D,j}(x,x,z) \right) \,dx \\ \nonumber
& & \quad + \sum_{j=1}^n \int_{e_j} \left( - \frac{\theta_j^2(x,\zeta)}{\theta_j^2(0,\zeta) K(\zeta)} +  \frac{ e^{2ix \zeta}}{ n i \zeta} \right) \,dx.
\end{eqnarray}
The computation of the first integral on the right-hand side is the classical Jost-Pais result \cite{ JoPa} recalled in Remark \ref{Jost-Pais},
\begin{equation}\label{r1}
 \int_{e_j} \left( R_{D,j}(x,x,z) - R_{D,j}^{(0)}(x,x,z) \right) \,dx = - \frac{\dot{w}_j(\zeta)}{ 2 \zeta w_j(\zeta)}.
\end{equation}
Here the derivative with respect to $\zeta$ is denoted by a dot, "$\cdot = d/d\zeta$".
To compute the second integral, we use the following equation which is true for any two arbitrary solutions of the equation $H_{D,j} \psi_j= \zeta^2 \psi_j$, namely
\begin{equation}\nonumber
2 \zeta u_j(x,\zeta) v_j (x,\zeta) = (u'_j(x,\zeta) \dot{v}_j(x,\zeta) - u_j(x,\zeta) \dot{v}'_j(x,\zeta))'.
\end{equation}
Applying this identity to $u_j=v_j=\theta_j$, we get
$$
\int_{\R_+} \frac{\theta_j^2(x,\zeta)}{K(\zeta) \theta_j^2(0,\zeta) } \,dx  = \frac{\left[ \theta_j'(x,\zeta) \dot{\theta}_j(x,\zeta) - \theta_j(x,\zeta) \dot{\theta}'_j (x,\zeta) \right]_0^{\infty}}{2 \zeta K(\zeta) \theta_j^2(0,\zeta)}.
$$
First, we consider the case of compactly supported potential $V_j$. Then, for large $x$ the Jost solution for the half-line Schr\"odinger operator $H_{D,j}$ is given by $\theta_j(x,\zeta) = e^{i \zeta x}$ and we have 
$$
\theta'_j (x,\zeta) = i \zeta e^{i \zeta x}, \quad \dot{\theta}_j (x,\zeta) = ix e^{i \zeta x}, \quad \dot{\theta}'_j (x,\zeta) = (i-x \zeta) e^{i \zeta x}.
$$
Therefore, for large $x$,
$$
\theta'_j (x,\zeta) \dot{\theta}_j (x,\zeta) - \theta_j (x,\zeta) \dot{\theta}'_j (x,\zeta) = i e^{2i\zeta x}.
$$
Note that the function $e^{2 i \zeta x}$ vanishes for $x \to \infty$, as $ \im \zeta >0$. We therefore get 

\begin{equation}\label{r2}
 \sum_{j=1}^n \int_{\R_+} - \frac{\theta_j^2(y,\zeta)}{\theta_j^2(0,\zeta) K(\zeta)} \,dy =  \sum_{j=1}^n \frac{ \theta'_j (0,\zeta) \dot{\theta}_j(0,\zeta) - \theta_j(0,\zeta) \dot{\theta}'_j (0,\zeta)  }{2 \zeta K(\zeta) \theta_j^2(0,\zeta)} = - \frac{ \frac{d}{d\zeta} K(\zeta)}{2 \zeta K(\zeta)}.
\end{equation}
By density arguments (see [Prop. 4.5.3,\cite{Y}]), based on the fact that $\sqrt{|V|} (H_{D,0}+\mathbbm{1})^{-1/2}$ is a Hilbert-Schmidt operator under condition \eqref{assumhalb}, the result can be extended to all potentials $V_j$ satisfying \eqref{assumhalb}.
Similarly, we consider the case $V= 0$ and obtain
\begin{equation}\label{r3}
\sum_{j=1}^n \int_{\R_+}    \frac{ e^{2iy\zeta}}{ n i \zeta} \,dy  = \frac{1}{2\zeta^2}.
\end{equation}
Combining \eqref{r1}, \eqref{r2} and \eqref{r3} with \eqref{r0}, we finally arrive at

\begin{eqnarray}
\tr (R(z) -R_0(z) ) &=& \frac{1}{2\zeta} \left( - \frac{\dot{K} (\zeta)}{K(\zeta)} + \frac 1\zeta - \sum_{j=1}^n \frac{\dot{w}_j(\zeta)}{w_j(\zeta)} \right) \\ \nonumber
&=&  \frac{1}{2\zeta} \left(  - \frac{d}{d\zeta} (\ln K(\zeta)) + \frac{d}{d\zeta} ( \ln \zeta) - \sum_{j=1}^n \frac{d}{d\zeta} (\ln w_j(\zeta) ) \right) \\ \nonumber
&=& - \frac{1}{2\zeta} \left( \frac{d}{d\zeta} \ln \left( \zeta^{-1} K(\zeta) \prod_{j=1}^n w_j(\zeta) \right) \right).
\end{eqnarray}
This is the claimed formula.
\end{proof}

\section{The perturbation determinant and the spectral shift function} \label{PDSSF}

Identity \eqref{resolventendiffstern} implies an explicit expression for the perturbation determinant
$$
D(z) :=  \mbox{det} (\mathbbm{1} +\sqrt{V} R_0(z) \sqrt{|V|}),\quad  z\in \rho(H_0),
$$
where $\sqrt{V} = (\sgn V) \sqrt{|V|}$. Strictly speaking this is the \textit{modified} perturbation determinant, nevertheless we shall refer to it simply as the perturbation determinant in what follows.  
Note that under the assumption \eqref{assumhalb} the perturbation determinant $D(z)$ is well-defined since the operator $\sqrt{|V|} (H_0 - z)^{-1/2}$ is Hilbert-Schmidt and therefore $\sqrt{V} R_0(z) \sqrt{|V|}$ is trace class. This follows as above from the fact that $\sqrt{|V_j|} \left( R_{D,j}^{(0)}(z)\right)^{ 1/2}$ is Hilbert-Schmidt for all $1\leq j \leq n$ together with \eqref{krein} for $V \equiv 0$ as the corresponding second term on the right-hand side of \eqref{krein} is of finite rank.

Furthermore, a simple computation shows, see also (0.9.36) \cite{Y2}, that the perturbation determinant is related to the trace of the resolvent difference by
\begin{equation}\label{wellstar}
D^{-1} (z) D'(z) = \Tr(R_0(z) - R(z)), \quad z\in \rho(H_0) \cap \rho(H).
\end{equation}
Hence, in view of Theorem \ref{trfrstar} we conclude that 
$$
D^{-1} (z) D'(z) =  \dfrac{ \frac{d}{dz} \left( z^{-1/2} K(z^{1/2}) \prod_j w_j(z^{1/2}) \right) }{z^{-1/2} K(z^{1/2}) \prod_j w_j(z^{1/2})},
$$
here we choose the square root of $z$ such that $\im z^{1/2} > 0$.
From which it follows that $D(z) =  C z^{-1/2} K(z^{1/2}) \prod_{j=1}^n w_j(z^{1/2}),$ for some $C \in \C.$
The coefficient $C$ is fixed by the asymptotics of a perturbation determinant, namely
\begin{equation}\label{limitDstar}
\lim_{ |\im z | \to \infty} D(z) = 1.
\end{equation}
This asymptotics is true if the operator $|V|^{1/2} (H_0 - z )^{-1/2}$ is Hilbert-Schmidt, see e.g. (0.9.37,\cite{Y2}). 
As $|\zeta| \to \infty, $ we have \cite{Y,SU}
\begin{equation}\label{wK}
w_j(\zeta) = \theta_j(0,\zeta) = 1+ O(|\zeta|^{-1}) \quad \mbox{and} \quad K(\zeta) = n i \zeta +O(1).
\end{equation}
This implies that $C= 1/i n.$
Thus, we have proved
\begin{corollary}
Assume that \eqref{assumhalb} is satisfied. Then, for $z\in \rho(H) $, the perturbation determinant of $H$ with respect to $H_0$ is given by
\begin{equation}\label{Dstar}
D(z) = \frac{  K(z^{1/2})}{ i n z^{1/2}} \prod_{j=1}^n w_j(z^{1/2}),
\end{equation}
where $\im z^{1/2} > 0$. 
\end{corollary}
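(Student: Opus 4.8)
The plan is to integrate the logarithmic-derivative identity furnished by Theorem \ref{trfrstar}. Introduce
$$
F(z) := z^{-1/2}\, K(z^{1/2}) \prod_{j=1}^n w_j(z^{1/2}), \qquad \im z^{1/2} > 0 ,
$$
so that the claimed formula \eqref{Dstar} reads $D = (in)^{-1} F$. Combining the general relation \eqref{wellstar}, namely $D'(z)/D(z) = \Tr(R_0(z) - R(z))$, with the trace formula \eqref{resolventendiffstern}, and performing the substitution $\zeta = z^{1/2}$ (for which $dz = 2\zeta\, d\zeta$, hence $\tfrac{1}{2\zeta}\tfrac{d}{d\zeta} = \tfrac{d}{dz}$), one sees that the right-hand side of \eqref{resolventendiffstern} is exactly $\tfrac{d}{dz}\ln F(z) = F'(z)/F(z)$. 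Thus $D'(z)/D(z) = F'(z)/F(z)$ on $\rho(H_0)\cap\rho(H)$.

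The second step is to upgrade this to $D = C F$ with a constant $C \neq 0$. Since $H_0 \ge 0$ has purely absolutely continuous spectrum, the resolvent set $\rho(H_0) = \C\setminus[0,\infty)$ is connected; on it $D$ is analytic, and $F$ is analytic as well, because the poles that $K(\zeta)$ develops at the zeros of the $w_j$ are cancelled by the factor $\prod_j w_j(\zeta)$. Moreover $F$ is not identically zero (its limit at infinity is computed below). Hence $D/F$ is meromorphic on the connected set $\rho(H_0)$, and its logarithmic derivative $D'/D - F'/F$ vanishes on the nonempty open subset $\rho(H_0)\cap\rho(H)$, therefore identically. A meromorphic function on a connected open set with identically vanishing logarithmic derivative is a nonzero constant, so $D(z) = C F(z)$ for all $z \in \rho(H_0)$.

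The last step determines $C$. By the normalization \eqref{limitDstar}, $\lim_{|\im z|\to\infty} D(z) = 1$, valid since $|V|^{1/2}(H_0-z)^{-1/2}$ is Hilbert-Schmidt. Using the large-$|\zeta|$ asymptotics \eqref{wK}, $w_j(\zeta) = 1 + O(|\zeta|^{-1})$ and $K(\zeta) = ni\zeta + O(1)$, and writing $\zeta = z^{1/2}$, we get
$$
F(z) = \frac{K(\zeta)}{\zeta}\prod_{j=1}^n w_j(\zeta) = \bigl(ni + O(|\zeta|^{-1})\bigr)\bigl(1 + O(|\zeta|^{-1})\bigr) \longrightarrow ni \quad\text{as } |\im z|\to\infty,
$$
so that $C \cdot ni = 1$, i.e.\ $C = (in)^{-1}$; this is \eqref{Dstar}. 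The substantive input is Theorem \ref{trfrstar}, which has already been established, so no serious obstacle remains; the only point needing a little care is the middle step, namely checking that $D/F$ is genuinely meromorphic on all of the connected set $\rho(H_0)$, which is what lets one propagate the equality of logarithmic derivatives from $\rho(H_0)\cap\rho(H)$ to the full domain.
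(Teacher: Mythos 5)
Your argument is correct and follows essentially the same route as the paper: combine the relation \eqref{wellstar} with the trace formula \eqref{resolventendiffstern} to identify the logarithmic derivatives of $D$ and of $z^{-1/2}K(z^{1/2})\prod_j w_j(z^{1/2})$, conclude that the two functions agree up to a multiplicative constant, and then fix that constant by the normalization \eqref{limitDstar} together with the asymptotics \eqref{wK}. The only difference is that you spell out the meromorphy/connectedness justification for passing from equal logarithmic derivatives to a constant ratio, which the paper leaves implicit.
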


Our next goal is to determine an explicit expression for the spectral shift function $\xi(\lambda; H, H_0)$ for the pair of operators $H, H_0$ in $L_2(\Gamma)$. If \eqref{assumhalb} is satisfied, then by the argumentation above the resolvent difference $R(z) -R_0(z) $ is trace class for all $z \in \rho(H)$. In this case it is known from general theory that for all $-c < \inf \sigma(H)$ there exists a real-valued function $\xi_c(\lambda)$ for the pair of operators $R(-c),R_0(-c)$ such that the relation 
\begin{equation}\label{trssf}
\tr \left( f(R(-c)) -f(R_0(-c)) \right) = \int_{-\infty}^\infty \xi_c (\lambda) f'(\lambda) \,d\lambda
\end{equation}
is true for all functions $f \in C_0^\infty(\R)$. This formula goes back to Lifshits \cite{Lif}. 

The spectral shift function for the pair $H, H_0$ is defined by the relation
\begin{equation}\label{definessf}
\xi(\lambda; H, H_0) := - \xi_c ((\lambda+c)^{-1}, R(-c), R_0(-c))
\end{equation}
for $\lambda > -c$ and $\xi(\lambda; H,H_0):=0$ for $\lambda \leq -c$. It can be shown that this definition is independent of the choice of $c$. By a change of variables the formula \eqref{trssf} for the pair $R,R_0$ can then be transformed into a formula for the pair $H,H_0$ and yields
\begin{equation}\label{trSSFstar}
\tr \left( f(H) -f(H_0) \right) = \int_{-\infty}^\infty \xi (\lambda; H, H_0) f'(\lambda) \,d\lambda ,
\end{equation}
for all functions $f \in C_0^\infty(\R)$.

The next theorem among other things extends the class of admissible functions $f$ in this trace formula.

\begin{theorem}\label{meins}
Let $H$ be the Schr\"odinger operator in $L_2(\Gamma)$ given in \eqref{staroperator} with the Kirchhoff vertex condition and $H_0 = -d^2/dx^2$ the corresponding unperturbed operator. Assume that condition \eqref{assumhalb} is satisfied. Then, the spectral shift function for the pair of operators $H,H_0$ is given by
$$
\xi (\lambda; H,H_0) = \pi^{-1} \lim_{\varepsilon \to 0+} \arg D(\lambda +i \varepsilon),
$$ 
where $\arg D(z) = \im \ln D(z)$ is defined via $\ln D(z) \to 0$ as $ \dist (z, \sigma(H_0)) \to \infty.$

Moreover,
\begin{equation}\label{lnSSFstar}
\ln D(z) = \int_{-\infty}^\infty \xi(\lambda; H,H_0) (\lambda-z)^{-1} \,d\lambda, \quad z \in \rho(H_0) \cap \rho(H),
\end{equation}
and \eqref{trSSFstar} holds provided $f$ has two locally bounded derivatives and for any $\varepsilon > 0$, $m > -1/2$  as $\lambda \to \infty$,
\begin{equation}\label{boundedsol}
f'(\lambda) = O( \lambda^{-m-1-\varepsilon}) , \quad f''(\lambda^{-2m-2}).
\end{equation}
Finally for $m > -1/2$,
$$
\int_{-\infty}^\infty | \xi(\lambda; H,H_0)| (1+|\lambda|)^{-m-1} \,d\lambda < \infty.
$$

\end{theorem}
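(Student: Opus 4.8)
The strategy is to recognize the two principal formulas --- the representation of $\xi(\lambda;H,H_0)$ through $\arg D$ and the integral identity \eqref{lnSSFstar} --- as instances of the general theory of the spectral shift function expressed via the (modified) perturbation determinant, and then to supply the three ingredients specific to the star graph: the explicit form \eqref{Dstar} of $D$, its behaviour near the real axis read off from the Jost data \eqref{wK}, and a functional-calculus argument for the enlarged trace formula.

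First I would record the analytic properties of $D$. By \eqref{Dstar} it is holomorphic on $\rho(H_0)=\C\setminus[0,\infty)$; it is finite and zero-free on the open upper half-plane because $H$ is self-adjoint; it is real-valued on $(-\infty,0)$ with sign changes precisely at the finitely many negative eigenvalues of $H$; and, under \eqref{assumhalb}, the Jost functions $w_j(\zeta)=\theta_j(0,\zeta)$ and $K(\zeta)$ extend continuously to $\zeta\in\R\setminus\{0\}$ with $w_j(\zeta)\neq0$ and $K(\zeta)\neq0$ there (by a Wronskian identity $\im K(\zeta)=\sum_j\zeta|w_j(\zeta)|^{-2}>0$ for $\zeta>0$), so that $D(\lambda+i0):=\lim_{\varepsilon\to0+}D(\lambda+i\varepsilon)$ exists and is non-zero for $\lambda>0$. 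Together with the normalization \eqref{limitDstar} already verified, these are precisely the hypotheses under which the general theory of the SSF for a resolvent trace class perturbation, in the form of \cite{Y} and \cite{Y2}, provides the branch $\arg D=\im\ln D$ normalized by $\ln D(z)\to0$ at infinity together with the boundary-value formula $\xi(\lambda;H,H_0)=\pi^{-1}\lim_{\varepsilon\to0+}\im\ln D(\lambda+i\varepsilon)$; I would also note, as a consistency check, that $\pi^{-1}\arg D(\cdot+i0)$ drops by one at each negative eigenvalue of $H$ and tends to $-N$ as $\lambda\to0-$ and to $0$ as $\lambda\to-\infty$, matching the elementary behaviour of $\xi$.

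Next I would extract the decay of $\xi$. Substituting the asymptotics \eqref{wK} into \eqref{Dstar} gives $D(z)=1+O(|z|^{-1/2})$ uniformly as $\dist\big(z,[0,\infty)\big)\to\infty$, hence $D(\lambda+i0)=1+O(\lambda^{-1/2})$ and therefore $|\xi(\lambda;H,H_0)|=\pi^{-1}|\arg D(\lambda+i0)|=O(\lambda^{-1/2})$ as $\lambda\to+\infty$. Since $\xi$ is locally integrable and vanishes below $\inf\sigma(H)$, for every $m>-1/2$ the function $|\xi(\lambda;H,H_0)|(1+|\lambda|)^{-m-1}$ is bounded by a compactly supported $L^1$ function plus a term of size $O(\lambda^{-m-3/2})$ near $+\infty$, which proves the last integrability assertion. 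As $\xi$ is then integrable against $(1+|\lambda|)^{-1}$ and $D(z)\to1$ at infinity, the general dispersion relation for the perturbation determinant also yields \eqref{lnSSFstar}.

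Finally, for the enlarged trace formula I would put $g(\mu):=f(\mu^{-1}-c)$ with $-c<\inf\sigma(H)$, so that $f(H)-f(H_0)=g\big(R(-c)\big)-g\big(R_0(-c)\big)$ and the change of variables \eqref{definessf} transforms \eqref{trssf} for the pair $\big(g,\xi_c\big)$ into \eqref{trSSFstar} for $\big(f,\xi(\cdot;H,H_0)\big)$. The right-hand side of \eqref{trSSFstar} converges absolutely because $|\xi(\lambda;H,H_0)|\le C(1+|\lambda|)^{m}$ by the previous step while $f'(\lambda)=O(\lambda^{-m-1-\varepsilon})$ by \eqref{boundedsol}; the hypothesis that $f'$ and $f''$ are locally bounded and satisfy \eqref{boundedsol} is exactly what makes $g$ regular enough near $\mu=0$ for $g\big(R(-c)\big)-g\big(R_0(-c)\big)$ to remain trace class, after which a density argument reduces the statement to the already established case $f\in C_0^\infty(\R)$. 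The main obstacle is this trace class step: under the weak decay hypotheses \eqref{boundedsol} one cannot simply invoke $C^{1}$ functional calculus but must use the polynomial decay of the singular values of $R(-c)-R_0(-c)$ --- which follows from $\sqrt{|V_j|}\big(R^{(0)}_{D,j}(z)\big)^{\alpha}$ being Hilbert--Schmidt for all $\alpha>1/4$ together with \eqref{krein} for $V\equiv0$ --- along the lines of \cite{Y} and \cite{Y2}. The remaining steps are essentially bookkeeping once \eqref{Dstar} and \eqref{wK} are available.
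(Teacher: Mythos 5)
Your outline matches the paper's at the top level --- both routes reduce the theorem to the abstract spectral-shift machinery of \cite{Y} (stated in the paper as Proposition \ref{SSF}) combined with the explicit formula \eqref{Dstar} --- but you never verify the one hypothesis of that machinery that actually requires work, and this is a genuine gap. Proposition \ref{SSF} delivers the boundary-value formula, \eqref{lnSSFstar}, the extended trace formula for the class \eqref{boundedsol}, and the weighted integrability of $\xi$ for all $m>-1/2$ only under the quantitative condition \eqref{toshow}, i.e.\ $\int_c^\infty\|r(-t)-r_0(-t)\|_1\,t^{-m}\,dt<\infty$ for the relevant range of $m$, which amounts to a decay rate of the \emph{trace norm of the resolvent difference as $t\to\infty$}. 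The paper's proof consists almost entirely of establishing $\|R(-t)-R_0(-t)\|_1\le C_\varepsilon t^{-3/2+\varepsilon}$ (Lemma \ref{conditioncheck}): one splits off the decoupled part $R_\infty(-t)-R_\infty^{(0)}(-t)$ (handled by Lemma 4.5.6 of \cite{Y}) and is left with a rank-two operator $(\cdot,g)g-(\cdot,f)f$ coming from the Kre{\v{\i}}n coupling term, whose trace norm is computed exactly by \eqref{rank2norm} and then shown to be $O(t^{-3/2})$ via $\|f\|^2=(2t)^{-1}$ and $\|g-f\|=O(t^{-1})$. Your proposal replaces all of this with the remark that one ``must use the polynomial decay of the singular values of $R(-c)-R_0(-c)$''; that is a statement about a fixed $c$ and does not supply the required decay in $t$, nor does it address the rank-two Kre{\v{\i}}n term at all, which is where the actual estimation happens.

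Second, your alternative derivation of $\int|\xi(\lambda)|(1+|\lambda|)^{-m-1}\,d\lambda<\infty$ from the pointwise bound $|\arg D(\lambda+i0)|=O(\lambda^{-1/2})$ is circular as written: the identity $\xi=\pi^{-1}\arg D(\cdot+i0)$, with the branch normalized at infinity and with $\xi$ normalized by \eqref{definessf}, is itself one of the conclusions to be proved, and in the cited framework it is obtained \emph{from} \eqref{toshow} together with the weighted integrability of $\xi$, which is precisely what pins down the additive constant in \eqref{lnSSFstar}. The pointwise decay of $D(\lambda+i0)-1$ coming from \eqref{wK} is a reasonable consistency check and could perhaps be turned into an honest independent proof of the integrability statement with care about branches, but it cannot substitute for \eqref{toshow} in the hypotheses of Proposition \ref{SSF}, on which the extended trace formula \eqref{trSSFstar} for the class \eqref{boundedsol} rests. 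To complete the proof you must actually prove the trace-norm estimate of Lemma \ref{conditioncheck}.
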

In the proof of Theorem \ref{meins} we make use of some results from abstract scattering theory which we collect for the reader's convenience in the following proposition. These results can be found e.g. in [Chapter 0.9, \cite{Y}].

\begin{proposition}\label{SSF}
Let $h$ and $h_0$ be lower semi-bounded operators on a Hilbert space and $v= h-h_0$. Assume that the operator $\sqrt{ | v | } \left( r_0(-c) \right)^{1/2}$ is Hilbert-Schmidt for some $c< \inf (\sigma (h) \cup \sigma (h_0) ) $, where $r_0(-c)=(h_0+c)^{-1}$ and $r(-c)=(h+c)^{-1}$.
Moreover, assume that 
\begin{equation}\label{toshow}
\int_c^\infty \| r(-t) -r_0(-t) \|_1 t^{-m} \,dt < \infty
\end{equation}
for some $m \in (-1,0)$. Let the spectral shift function $\xi(\lambda;h,h_0)$ be defined as above and the (modified) perturbation determinant by
$$
d(z) = \det ( \mathbbm{1} + \sgn v \sqrt{|v|} r_0(z) \sqrt{|v|} ), \quad z \in \rho(h_0).
$$
Then, $d(z) \to 1 \quad \mbox{as} \ \dist (z, \sigma(h_0)) \to \infty$ and 
\begin{equation}\label{argssf}
\xi (\lambda; h,h_0) = \pi^{-1} \lim_{\varepsilon \to 0+} \arg d(\lambda +i \varepsilon),
\end{equation}
where $\arg d(z) = \im \ln d(z)$ is defined via $\ln d(z) \to 0$ as $ \dist (z, \sigma(h_0)) \to \infty.$
Moreover,
\begin{equation}\label{lnSSF}
\ln d(z) = \int_{-\infty}^\infty \xi(\lambda; h,h_0) (\lambda-z)^{-1} \,d\lambda, \quad z \in \rho(h_0) \cap \rho(h),
\end{equation}
and
\begin{equation}\label{trSSF}
\tr ( f(h) -f(h_0)) = \int_{-\infty}^\infty  \xi(\lambda; h,h_0) f'(\lambda) \,d\lambda
\end{equation}
provided $f$ is as in \eqref{boundedsol}.
Finally,
$$
\int_{-\infty}^\infty | \xi(\lambda; h,h_0)| (1+|\lambda|)^{-m-1} \,d\lambda < \infty.
$$
\end{proposition}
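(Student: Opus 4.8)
The plan is to reduce all assertions to the classical trace-class theory of the spectral shift function, in two moves: pass from the unbounded pair $h,h_0$ to the \emph{bounded} pair $r(-c),r_0(-c)$, and then use the extra hypothesis \eqref{toshow} both to enlarge the class of admissible test functions and to obtain the weighted integrability of $\xi$. All the individual statements are available in [Ch.~0.9, \cite{Y}]; the substance is only to see that our two hypotheses are exactly the ones that machinery requires. First I would unpack the Hilbert--Schmidt assumption: since $|v|^{1/2}(r_0(-c))^{1/2}$ is Hilbert--Schmidt and $(r_0(-c))^{-1/2}(h_0-z)^{-1}(r_0(-c))^{-1/2}=(h_0+c)(h_0-z)^{-1}$ is a bounded function of $h_0$, the operator $\sgn v\,|v|^{1/2}r_0(z)|v|^{1/2}$ is trace class for every $z\in\rho(h_0)$, so $d(z)$ is well defined; writing $|v|^{1/2}r_0(z)|v|^{1/2}=\bigl(|v|^{1/2}(h_0-z)^{-1/2}\bigr)\bigl(|v|^{1/2}(h_0-\bar z)^{-1/2}\bigr)^{*}$ and applying dominated convergence in the singular-value expansion of $|v|^{1/2}(r_0(-c))^{1/2}$ gives $\||v|^{1/2}r_0(z)|v|^{1/2}\|_1\to0$, hence $d(z)\to1$, as $\dist(z,\sigma(h_0))\to\infty$. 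Inserting the Birman--Schwinger resolvent identity for $r(z)-r_0(z)$---whose middle factor $\bigl(\mathbbm{1}+\sgn v\,|v|^{1/2}r_0(z)|v|^{1/2}\bigr)^{-1}$ exists for $z\in\rho(h_0)$ precisely when $z\in\rho(h)$---exhibits $r(-c)-r_0(-c)$ as a product of two Hilbert--Schmidt operators and a bounded one, hence trace class, and, by the same computation that yields \eqref{wellstar}, gives $d(z)^{-1}d'(z)=\tr(r_0(z)-r(z))$ on $\rho(h_0)\cap\rho(h)$.

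Next I would invoke the classical results for the bounded self-adjoint pair $A_0:=r_0(-c)$, $A:=r(-c)$, for which $A-A_0\in\mathfrak{S}_1$: by [Ch.~0.9, \cite{Y}] the function $\xi_c:=\xi(\cdot\,;A,A_0)$ lies in $L^1(\R)$, Kre{\v{\i}}n's formula $\ln\Delta(w)=\int\xi_c(\mu)(\mu-w)^{-1}\,d\mu$ holds for the perturbation determinant $\Delta(w)=\det\bigl(\mathbbm{1}+(A-A_0)(A_0-w)^{-1}\bigr)$, the trace formula holds for $f\in C_0^\infty$, and $\xi_c(\mu)=\pi^{-1}\lim_{\varepsilon\to0+}\arg\Delta(\mu+i\varepsilon)$. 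Since $\xi(\lambda;h,h_0)=-\xi_c((\lambda+c)^{-1})$ by definition, and since $d(z)$ and $\Delta((z+c)^{-1})$ have the same logarithmic derivative (by the previous paragraph and the chain rule) while $d(-c)>0$ (perturbation determinants are positive below the spectrum), so that they differ by a positive constant consistent with the normalisations, the change of variables $\mu=(\lambda+c)^{-1}$ converts these statements into \eqref{argssf}, \eqref{lnSSF} and \eqref{trSSF} for $f\in C_0^\infty(\R)$; that $d(z)\to1$ was shown above.

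The remaining point---extending \eqref{trSSF} to the class \eqref{boundedsol} and proving $\int_{-\infty}^\infty|\xi(\lambda;h,h_0)|(1+|\lambda|)^{-m-1}\,d\lambda<\infty$---is where \eqref{toshow} enters, and I expect it to be the main obstacle. Under $\mu=(\lambda+c)^{-1}$, condition \eqref{toshow} is equivalent to a bound on $\|(A+\sigma)^{-1}-(A_0+\sigma)^{-1}\|_1$ near the edge $\mu=0$ of $\sigma(A_0)$ weighted by $\sigma^{m}$, while the target bound becomes $\int_0^{\delta}|\xi_c(\mu)|\,\mu^{m-1}\,d\mu<\infty$---the two exponents being matched by design. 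Deriving the latter from the former is the genuine work: near the continuous spectrum $\dist(z,\sigma(h_0))$ no longer grows, so the cheap estimate $d(z)\to1$ is useless there, and one must instead push the identity $d(z)^{-1}d'(z)=\tr(r_0(z)-r(z))$ together with the Cauchy-type representation \eqref{lnSSF} through a Phragm\'en--Lindel\"of / contour-deformation argument (as in [Ch.~0.9, \cite{Y}]) in order to control $\xi=\pi^{-1}\im\ln d(\,\cdot+i0)$ at high energy. Granting the weighted integrability of $\xi$, the extension of \eqref{trSSF} to $f$ satisfying \eqref{boundedsol} is a routine cut-off argument: take $f_N\in C_0^\infty$ equal to $f$ on $[-N,N]$ with derivatives controlled as in \eqref{boundedsol} uniformly in $N$, pass to the limit on the right-hand side using the weighted integrability, and on the left-hand side using, e.g., the Helffer--Sj\"ostrand formula together with \eqref{toshow}---the decay conditions on $f'$ and $f''$ being exactly matched to the weight $t^{-m}$.
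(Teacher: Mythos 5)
The paper gives no proof of this proposition: it is explicitly presented as a collection of known results from [Chapter 0.9, \cite{Y}], and the paper's actual work consists in verifying the hypothesis \eqref{toshow} for the star graph (Lemma \ref{conditioncheck}). Your outline is a correct reconstruction of the standard argument behind that citation --- reduction to the bounded pair $r(-c),r_0(-c)$, identification of the logarithmic derivatives of $d(z)$ and of the resolvent-pair determinant, and the change of variables $\mu=(\lambda+c)^{-1}$ --- but the two genuinely hard steps, namely deducing the weighted integrability of $\xi$ from \eqref{toshow} and extending \eqref{trSSF} to the class \eqref{boundedsol}, are exactly the ones you in turn defer to the same reference, so the proposal sits at the same level of detail as the paper's citation rather than constituting a self-contained proof.
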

In what follows, we prove that for the case of star graphs $\Gamma$ the condition \eqref{toshow} is satisfied for all $m> -1/2$. Then, by Proposition \ref{SSF} the assertions of Theorem \ref{meins} will follow.
\noindent We need to compute the trace norm of a rank two operator.
\begin{lemma}
Let  $\mathcal{H}$ be a Hilbert space and $f,g \in \mathcal{H}$. Further, assume that $\mathcal{R} = (\cdot , f)f - (\cdot , g)g$ is an operator of rank two on $\mathcal{H}.$ Then, the trace norm of $\mathcal{R}$ is given by
\begin{equation}\label{rank2norm}
\| \mathcal{R} \|_1 = \left( ( \|f\|^2 +\|g\|^2 ) ^2 - 4 |(f,g)|^2 \right)^{1/2}.
\end{equation}
\end{lemma}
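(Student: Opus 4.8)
The plan is to diagonalize the rank-two self-adjoint operator $\mathcal{R}$ on the (at most) two-dimensional subspace $\mathcal{M} = \mathrm{span}\{f,g\}$ and sum the absolute values of its two eigenvalues. First I would dispose of the degenerate cases: if $f$ and $g$ are linearly dependent, or one of them vanishes, then $\mathcal{R}$ has rank $\leq 1$ and formula \eqref{rank2norm} is checked directly (note that in the dependent case $\|f\|^2\|g\|^2 = |(f,g)|^2$ by Cauchy–Schwarz equality, so the right-hand side collapses to $\bigl|\|f\|^2-\|g\|^2\bigr|$, which is indeed $\|\mathcal{R}\|_1$). So assume $f,g$ are linearly independent; then $\mathcal{R}$ maps $\mathcal{M}$ into itself and annihilates $\mathcal{M}^\perp$, hence $\|\mathcal{R}\|_1 = |\mu_1| + |\mu_2|$ where $\mu_1,\mu_2$ are the eigenvalues of the restriction $\mathcal{R}|_{\mathcal{M}}$.

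Next I would compute the trace and determinant of $\mathcal{R}|_{\mathcal{M}}$ in a convenient basis, so that $|\mu_1|+|\mu_2|$ can be expressed through them. A clean way: pick an orthonormal basis $\{e_1,e_2\}$ of $\mathcal{M}$, write the $2\times 2$ Gram-type matrix of $\mathcal{R}|_{\mathcal{M}}$, and read off
$$
\operatorname{tr}\mathcal{R}|_{\mathcal{M}} = \|f\|^2 - \|g\|^2, \qquad \det \mathcal{R}|_{\mathcal{M}} = -\bigl(\|f\|^2\|g\|^2 - |(f,g)|^2\bigr).
$$
The trace is basis-independent and immediate. For the determinant I would use that $\mathcal{R}|_\mathcal{M} = P_f - P_g$ where $P_f = (\cdot,f)f$, $P_g=(\cdot,g)g$ are (unnormalized) rank-one operators; one can compute $\det$ either via the matrix representation in the basis $\{f,g\}$ corrected by the Gram matrix, or by noting $\det(P_f - P_g) = \det(P_f)\det(I - P_f^{-1}P_g)$ is awkward since $P_f$ is singular on $\mathcal{M}^\perp$—so the honest route is just to expand the $2\times 2$ determinant in an orthonormal basis, which is a short computation yielding exactly $|(f,g)|^2 - \|f\|^2\|g\|^2$ (this is $\leq 0$, consistent with $\mathcal{R}$ having one eigenvalue of each sign).

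With $\det \mathcal{R}|_{\mathcal{M}} \leq 0$ the two eigenvalues $\mu_1,\mu_2$ have opposite signs (or one is zero), so $|\mu_1| + |\mu_2| = |\mu_1 - \mu_2| = \sqrt{(\mu_1+\mu_2)^2 - 4\mu_1\mu_2} = \sqrt{(\operatorname{tr})^2 - 4\det}$. Substituting,
$$
\|\mathcal{R}\|_1 = \sqrt{\bigl(\|f\|^2-\|g\|^2\bigr)^2 + 4\bigl(\|f\|^2\|g\|^2 - |(f,g)|^2\bigr)} = \sqrt{\bigl(\|f\|^2+\|g\|^2\bigr)^2 - 4|(f,g)|^2},
$$
which is \eqref{rank2norm}. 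Finally I would observe the degenerate-rank cases computed at the start are in fact recovered by the same final formula, so no separate bookkeeping is needed in the statement.

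The only genuine obstacle is the determinant computation: one must be careful that $\mathcal{R}$ is not self-adjointly diagonalizable by an orthogonal change of basis "for free" unless one works inside $\mathcal{M}$ with the inherited inner product, and that the formula $\det\mathcal{R}|_\mathcal{M}$ really is the product of the honest spectral eigenvalues (not the eigenvalues of the matrix in the non-orthonormal basis $\{f,g\}$). Handling this correctly—i.e., insisting on an orthonormal basis of $\mathcal{M}$, or equivalently writing $\mathcal{R}|_\mathcal{M}$ via the symmetric square root of the Gram matrix—is the one step that needs care; everything else is routine.
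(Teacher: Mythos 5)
Your proposal is correct and follows essentially the same route as the paper: both reduce $\mathcal{R}$ to a self-adjoint $2\times 2$ matrix on $\mathrm{span}\{f,g\}$ (via an orthonormal basis) and sum the absolute values of its eigenvalues, the paper by writing out the Gram--Schmidt matrix and its singular values explicitly, you by the equivalent trace/determinant shortcut $|\mu_1|+|\mu_2|=\sqrt{(\operatorname{tr})^2-4\det}$ justified by $\det\le 0$. Your claimed values $\operatorname{tr}=\|f\|^2-\|g\|^2$ and $\det=|(f,g)|^2-\|f\|^2\|g\|^2$ check out against the paper's explicit matrix, so the argument is sound.
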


\begin{proof}
We may assume that $f \neq 0$, for otherwise formula \eqref{rank2norm} is obvious.
We construct an orthonormal bases on Ran$({\mathcal{R}})$ by applying the Gram Schmidt process,
$$
\mathcal{B} = \{ f/ \|f\| , \left( g-(f,g) f/ \|f\|^2 \right) / \left( \|g\|^2 - |(f,g)|^2/ \|f\|^2 \right)^{-1/2} \}.
$$
The operator $\mathcal{R}$ is described by a $2 \times 2$ matrix $M=(m)_{k \ell}$, where
$$
m_{11} = \|f\|^2 - |(f,g)|^2 / \|f\|^2, \quad m_{22} = - \| g\|^2 + |(f,g)|^2 / \|f \|^2,
$$
$$
m_{12}= \overline{m_{21} }= -\left((f,g) / \|f\|\right) \left( \|g\|^2 - |(f,g)|^2/ \|f\|^2 \right)^{1/2}.
$$
The singular values of $M$ turn out to be 
$$
s_1 = \frac 12 ( \|f \|^2 - \|g \|^2 + \omega ) ,\quad s_2 = \frac 12 (\omega - \|f\|^2 + \|g \|^2 ),
$$
where $\omega := \left( ( \|f\|^2 +\|g\|^2 ) ^2 - 4 |(f,g)|^2 \right)^{1/2}.$ Hence, $\| \mathcal{R} \|_1 = s_1  +s_2 = \omega$. This proves \eqref{rank2norm}.
\end{proof}

\begin{lemma}\label{conditioncheck}
Let $H$ be the Schr\"odinger operator in $L_2(\Gamma)$ given in \eqref{staroperator} with the Kirchhoff vertex condition and $H_0 = -d^2/dx^2$ the corresponding unperturbed operator. Assume that condition \eqref{assumhalb} is satisfied. Then, the resolvents $R(z)$ and $R_0(z)$ satisfy for all $\varepsilon >0$ and for $t$ big enough,
$$
\| R(-t) - R_0(-t) \|_1 \leq C_\varepsilon t^{-3/2+ \varepsilon}.
$$
\end{lemma}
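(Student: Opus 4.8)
The plan is to split $R(-t)-R_0(-t)$ via Krein's formula \eqref{krein} and estimate the two pieces separately. Writing $\zeta=i\sqrt t$ (so $z=-t=\zeta^2$ with $\im\zeta>0$), formula \eqref{krein} applied to $H$ and to $H_0$ gives
$$
R(-t)-R_0(-t)=\bigl(R_\infty(-t)-R_\infty^{(0)}(-t)\bigr)+\mathcal R(-t),
$$
where $R_\infty^{(0)}$ is the decoupled free resolvent and $\mathcal R(-t)$ is the rank-at-most-two remainder built from the two vertex terms,
$$
\mathcal R(-t)=-\frac{1}{K(i\sqrt t)}(\,\cdot\,,g)\,g+\frac{1}{ni(i\sqrt t)}(\,\cdot\,,g_0)\,g_0, \qquad g_j(x)=\frac{\theta_j(x,i\sqrt t)}{\theta_j(0,i\sqrt t)},\quad (g_0)_j(x)=e^{-\sqrt t\,x}.
$$
For $z=-t<0$ the Jost solutions $\theta_j(\cdot,i\sqrt t)$ are real (and positive for $t$ large), so $g$ and $g_0$ are real-valued. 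I will show that each of the two pieces has trace norm $O(t^{-3/2})$, which is more than enough.

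For the decoupled part, $R_\infty(-t)-R_\infty^{(0)}(-t)=\bigoplus_j\bigl(R_{D,j}(-t)-R_{D,j}^{(0)}(-t)\bigr)$, and I would use the Birman--Schwinger resolvent identity already displayed in the proof of Theorem \ref{trfrstar}, bounding the trace norm by the product of two Hilbert--Schmidt norms times $\|(\mathbbm 1+\sqrt{|V_j|}R_{D,j}^{(0)}(-t)\sqrt{V_j})^{-1}\|$. The latter operator norm is $\le 2$ once $t$ is large, since $\|\sqrt{|V_j|}(R_{D,j}^{(0)}(-t))^{1/2}\|_2^2=\int_{e_j}|V_j(x)|\,R_{D,j}^{(0)}(x,x;-t)\,dx\le \|V_j\|_1/(2\sqrt t)\to0$. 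From the explicit free Dirichlet kernel and the elementary bound $|R_{D,j}^{(0)}(x,y;-t)|\le (2\sqrt t)^{-1}e^{-\sqrt t\,|x-y|}$ one gets $\|\sqrt{|V_j|}R_{D,j}^{(0)}(-t)\|_2^2\le \|V_j\|_1/(4t^{3/2})$, hence $\|R_{D,j}(-t)-R_{D,j}^{(0)}(-t)\|_1=O(t^{-3/2})$ and, summing over $j$, $\|R_\infty(-t)-R_\infty^{(0)}(-t)\|_1=O(t^{-3/2})$.

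For the rank-two remainder, note that $-1/K(i\sqrt t)>0$ and $1/(ni\cdot i\sqrt t)=-1/(n\sqrt t)<0$ for $t$ large, so $\mathcal R(-t)=(\,\cdot\,,f)f-(\,\cdot\,,h)h$ with $f=\alpha g$, $h=\beta g_0$, $\alpha^2=-1/K(i\sqrt t)$, $\beta^2=1/(n\sqrt t)$. Since $f,h$ are real and $(f,h)>0$, the rank-two lemma above yields the factorization
$$
\|\mathcal R(-t)\|_1=\bigl((\|f\|^2+\|h\|^2)^2-4(f,h)^2\bigr)^{1/2}=\|f-h\|\,\|f+h\|.
$$
Here $\|f+h\|\le\|f\|+\|h\|=O(t^{-1/2})$ (using $|g_j(x)|\le Ce^{-\sqrt t\,x}$ and $\alpha^2,\beta^2=O(t^{-1/2})$). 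For $\|f-h\|=\|\beta(g-g_0)+(\alpha-\beta)g\|$ I would invoke the Volterra representation $\theta_j(x,i\sqrt t)=e^{-\sqrt t\,x}m_j(x,i\sqrt t)$ with $|m_j(x,i\sqrt t)-1|\le C\,t^{-1/2}\int_x^\infty|V_j|$, valid under \eqref{assumhalb}, together with $K(i\sqrt t)=-n\sqrt t+O(1)$ from \eqref{wK}. These give $|g_j(x)-e^{-\sqrt t\,x}|\le C\,t^{-1/2}e^{-\sqrt t\,x}$ and $|\alpha-\beta|=O(t^{-3/4})$, so $\|g-g_0\|=O(t^{-3/4})$, $\|\beta(g-g_0)\|=O(t^{-1})$, $\|(\alpha-\beta)g\|=O(t^{-1})$, whence $\|f-h\|=O(t^{-1})$ and $\|\mathcal R(-t)\|_1=O(t^{-3/2})$. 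Combining the two contributions, $\|R(-t)-R_0(-t)\|_1=O(t^{-3/2})\le C_\varepsilon t^{-3/2+\varepsilon}$.

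The main obstacle is the rank-two term. Taken individually, each of the two vertex rank-one operators has trace norm only of order $t^{-1}$, which is too weak; the extra half power is gained exactly by keeping them together and exploiting the factorization $\|\mathcal R(-t)\|_1=\|f-h\|\,\|f+h\|$, which reduces matters to controlling how closely the normalized Jost solution $g$ (scaled by $\alpha$) approximates the free exponential $g_0$ (scaled by $\beta$) — and this is where the quantitative Jost-solution bounds and the asymptotics \eqref{wK} enter. The decoupled part is the standard Birman--Schwinger estimate and is routine.
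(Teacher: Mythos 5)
Your proof is correct and follows essentially the same route as the paper: the same Kre{\v{\i}}n-formula decomposition into the decoupled difference plus a rank-two vertex term, the same rank-two trace-norm lemma, and the same comparison of the perturbed vertex vector with the free exponential via the Jost-solution bounds and \eqref{wK}. The only (harmless) differences are that you establish the $O(t^{-3/2})$ bound for the decoupled part directly by a Birman--Schwinger estimate instead of citing Yafaev's Lemma 4.5.6, and that you use the factorization $\|\mathcal{R}\|_1=\|f-h\|\,\|f+h\|$ (valid since $f,h$ are real) in place of the paper's inequality \eqref{rank2}.
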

\begin{proof}
By adding zero we estimate the trace norm by
\begin{equation}\label{trnormrank2}
\| R(-t) -R_0(-t) \|_1 \leq \| \mathcal{R} \|_1 + \| R_\infty(-t) - R_\infty^{(0)} (-t) \|_1,
\end{equation}
where $\mathcal{R}:= R(-t) - R_\infty(-t) + R_\infty^{(0)} (-t) - R_0(-t)$ is an operator of rank two. 
The second norm in the right-hand side of \eqref{trnormrank2} can be estimated by $  \| R_\infty(-t) - R_\infty^{(0)} (-t) \|_1 \leq c t^{-3/2 + \varepsilon}$, see \cite{Y}, Lemma 4.5.6.
Thus it remains to bound the norm of the rank two operator $\mathcal{R}$.
From Kre{\v{\i}}n's formula \eqref{krein} we have
$$
( R_0(x_k,x_{\ell},-t) - R_\infty^{(0)} (x_k,x_{\ell},-t) )_{k \ell} = f_k(x_k) f_{\ell}(x_{\ell}), \quad f_k(x_k) = \frac{e^{-\sqrt{t}x_k}}{ \sqrt{n} t^{1/4}} ,
$$
and hence, 
$$
R_0(-t) -  R_\infty^{(0)} (-t)  = ( \cdot, f)f, \quad f=(f_1, \ldots , f_n)^T.
$$
Further,
$$
( R(x_k,x_{\ell},-t)  - R_\infty(x_k,x_{\ell},-t))_{k \ell} = g_k(x_k) g_{\ell}(x_{\ell}), \quad g_k(x_k) = \frac{\theta_k(x_k, i\sqrt{t})}{\sqrt{ - K(i \sqrt{t})}\theta_k(0,i \sqrt{t})} ,
$$
and
$$
R(-t) - R_\infty(-t)  =  (\cdot, g) g,\quad g = (g_1, \ldots , g_n)^T.
$$
In view of \eqref{rank2norm} we have 
\begin{equation}
 \| R(-t) - R_\infty(-t) + R_\infty^{(0)} (-t) - R_0(-t) \|_1 = \left( ( \|f\|^2 +\|g\|^2 ) ^2 - 4 |(f,g)|^2 \right)^{1/2}.
\end{equation}
In the remaining part we show that $ (\|f\|^2 + \|g\|^2)^2 - 4 |(f,g)|^2 = O( t^{-3})$ as $t \to \infty$. Let us set $g= f+h$ and note that $h$ is a real-valued function. Then by applying the Cauchy-Schwarz inequality and the arithmetic inequality,
\begin{equation}\label{rank2}
 (\|f\|^2 + \|g\|^2)^2 - 4 |(f,g)|^2 = 4 ( \|f\|^2 +(f,h) ) \|h\|^2 + \|h\|^4 \leq 6 \|h\|^2 \|f\|^2 + 3 \|h\|^4 .
\end{equation}
We compute
\begin{equation}\label{f^2}
\|f\|^2 = \sum_{j=1}^n \int_0^\infty \frac{e^{-2\sqrt{t} x_j}}{n\sqrt{t}}  \,dx_j = (2t)^{-1}.
\end{equation}
Next, we consider $h = h_1 + h_2$ , where $h_k = (h_{k,1},\ldots , h_{k,n})^T, \ k=1,2,$
\begin{eqnarray}\nonumber
h_{1,j} &=&  \left( \sqrt{ - K(i \sqrt{t})} \theta_j(0,i\sqrt{t}) \right)^{-1} \left( \theta_j(x_j,i \sqrt{t}) - e^{-\sqrt{t} x_j} \right), \\  \nonumber
h_{2,j} &=& \left( \left( \sqrt{ - K(i\sqrt{t})} \theta_j(0,i \sqrt{t}) \right)^{-1} - \left(\sqrt{n} t^{1/4} \right)^{-1} \right) e^{-\sqrt{t} x_j}.
\end{eqnarray}
Because of \eqref{wK} we have  as $t \to \infty$,
\begin{equation}\label{h_1}
\left| \sqrt{ - K(i \sqrt{t})} \theta_j(0,i\sqrt{t}) \right|^{-1} = O(t^{-1/4})
\end{equation}
and further, see e.g. Lemma 4.1.4. \cite{Y},
\begin{equation}\label{h_2}
\left| \theta_j(x_j,i \sqrt{t}) - e^{-\sqrt{t} x_j} \right| \leq \frac{c}{\sqrt{t}} e^{-\sqrt{t} x_j}, \quad c \in \R.
\end{equation}
Hence, in view of \eqref{h_1} and \eqref{h_2}, $\| h_1 \| = O(t^{-1})$. To compute the asymptotics for $h_2$ we rewrite
\begin{eqnarray}\nonumber
h_{2,j} &=& \left( \sqrt{n} t^{1/4} - \sqrt{ - K(i\sqrt{t})} \theta_j(0,i \sqrt{t}) \right) \left( \sqrt{ - K(i\sqrt{t})} \theta_j(0,i \sqrt{t}) \right)^{-1} f_j(x_j)\\ \nonumber
&=&\frac{ \left( n \sqrt{t} + K(i \sqrt{t}) \theta_j^2(0, i \sqrt{t}) \right)}{ \sqrt{n} t^{1/4} + \sqrt{ - K(i\sqrt{t})} \theta_j(0,i\sqrt{t}) } \frac{f_j(x_j)}{\sqrt{ - K(i\sqrt{t})} \theta_j(0,i \sqrt{t})}.
\end{eqnarray}
Together with \eqref{f^2} and \eqref{h_1}, this leads to $\|h_2\| = O(t^{-1})$ and therefore, $\|h\| = O(t^{-1})$. This yields in view of \eqref{rank2norm} and \eqref{rank2} that $\| \mathcal{R} \|_1 = O(t^{-3/2})$. This proves the assertion of the lemma.
\end{proof}

With Lemma \ref{conditioncheck} all assumptions of Proposition \ref{SSF} are fullfilled and therefore the spectral shift function for the pair $H,H_0$ in $L_2(\Gamma)$ satisfies the relations \eqref{lnSSF} and \eqref{trSSF}. Especially, we have proved Theorem \ref{meins}.

\begin{remark}
Lemma \ref{conditioncheck} implies that $(H+c)^{\beta} - (H_0 +c )^{\beta}$ is trace class for $\beta < 1/2$, see \cite{Y}.
\end{remark}

\section{Low-energy asymptotics and Levinson's formula} \label{levii}

In this section we study the low-energy asymptotics of $D(z)$ as $|z| \to 0.$ This will allow us to prove an analog of Levinson's formula for star shaped quantum graphs. \textit{Throughout this section we assume that \eqref{firstmoment} is satisfied.}

\begin{definition}\label{zeroresonance}
We say that the operator $H$ in $L_2(\Gamma)$, given in \eqref{staroperator}, has a \textit{resonance} at $\zeta = 0$ if the equation
\begin{equation}\label{tavsan}
-u'' + Vu = 0
\end{equation}
has a non-trivial bounded solution satisfying the continuity and Kirchhoff conditions. By definition, the \textit{multiplicity} of the resonance is the dimension of the corresponding solution space.
\end{definition}

\begin{remark}
We shall show below that $\zeta= 0$ is never an eigenvalue.
\end{remark}

We recall some auxiliary results on half-line Schr\"odinger operators. For the half-line Schr\"odinger operator the Jost solution of the equation $H_{j} u = \zeta^2 u$ was characterized by its asymptotics $\theta_j(x,\zeta) = e^{ix \zeta} ( 1+o(1))$ as $| \zeta | \to \infty$. Further, the function $\theta_j(x,0)$ satisfies the equation $H_j u= 0$ and its behavior at $\zeta = 0$ is given by 
\begin{equation}\label{behaviorat0}
\theta_j(x,0) = 1 + O\left(\int_x^\infty y |V_j(y)| \,dy \right) = 1+o(1), \quad \mbox{as} \ x \to \infty,
\end{equation}
(see e.g. Lemma 4.3.1.,\cite{Y}). Recall also that the Jost function is $w_j(\zeta)= \theta_j(0,\zeta)$. If $w_j(0) = 0$, the low-energy asymptotics
\begin{equation}\label{loww}
w_j(\zeta)  = -i w_0^{(j)} \zeta + o(\zeta) ,\quad \zeta \to 0
\end{equation}
is true with some constant $w_0^{(j)} \neq 0$.

\begin{lemma}
If $\Theta(x, 0)$ is a bounded solution of equation \eqref{tavsan}, then for some $c_j \in \C$,
\begin{equation}\label{T}
\Theta(x, 0) = \bigoplus_{j=1}^n c_j \theta_j(x,0).
\end{equation}
Moreover, the operator $H$ cannot have a zero eigenvalue.
\end{lemma}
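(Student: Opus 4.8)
The plan is to argue edge by edge, exploiting that under the first--moment assumption \eqref{firstmoment} the zero--energy equation $-u''+V_ju=0$ on each half--line $e_j=[0,\infty)$ has a two--dimensional solution space of which only a one--dimensional subspace consists of bounded functions. First I would restrict a bounded solution $\Theta(\cdot,0)$ of \eqref{tavsan} to the edge $e_j$; its restriction $\Theta_j(\cdot,0)$ is then a genuine solution of $-u''+V_ju=0$ on $[0,\infty)$ (a $C^1$ function with locally absolutely continuous derivative, by one--dimensional regularity for $V_j\in L_1$). By \eqref{behaviorat0} the Jost solution $\theta_j(\cdot,0)$ is a bounded solution with $\theta_j(x,0)\to1$ as $x\to\infty$; in particular it is not identically zero, and its zeros have no accumulation point at infinity, so it does not vanish on $[x_0,\infty)$ for $x_0$ large enough. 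By reduction of order, $x\mapsto\theta_j(x,0)\int_{x_0}^{x}\theta_j(s,0)^{-2}\,ds$ extends to a solution of the equation on $[0,\infty)$ that is linearly independent of $\theta_j(\cdot,0)$, and since $\theta_j(s,0)^{-2}\to1$ it behaves like $x\,(1+o(1))$ at infinity, hence is unbounded. Therefore every bounded solution on $e_j$ is a scalar multiple of $\theta_j(\cdot,0)$; writing $\Theta_j(\cdot,0)=c_j\theta_j(\cdot,0)$ on each edge yields \eqref{T}. I would note that the continuity and Kirchhoff conditions play no role in this step: they only select, among the tuples $(c_1,\dots,c_n)$, those that glue to an admissible function, which is what will later fix the multiplicity of the resonance.

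For the remaining assertion, suppose $H\Theta=0$ with $\Theta\in L_2(\Gamma)$ and $\Theta\neq0$. Each restriction $\Theta_j$ lies in $L_2(e_j)$ and solves $-u''+V_ju=0$, so by the analysis above $\Theta_j=c_j\theta_j(\cdot,0)+d_j\psi_j$, where $\psi_j(x)=x\,(1+o(1))$ as $x\to\infty$. The term $d_j\psi_j$ grows linearly, so $\Theta_j\in L_2(e_j)$ forces $d_j=0$; then $\Theta_j(x)\to c_j$, and square integrability forces $c_j=0$. Hence $\Theta_j\equiv0$ for every $j$, so $\Theta\equiv0$, a contradiction. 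Thus $\zeta=0$ is never an eigenvalue of $H$, only possibly a resonance.

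I do not expect a real obstacle in this argument; the only two points I would spell out carefully are that an element of the operator domain of $H$ restricts on each edge to a classical solution of the ordinary differential equation, and the asymptotics $\psi_j(x)=x\,(1+o(1))$ of the second, unbounded solution, which is immediate from $\theta_j(x,0)\to1$. If one wished to avoid reduction of order altogether, an equivalent route is to observe that for any bounded solution $\Theta_j$ the Wronskian $\theta_j(x,0)\Theta_j'(x)-\theta_j'(x,0)\Theta_j(x)$ is independent of $x$, and then to use $\theta_j'(x,0)\to0$ (which follows by integrating the equation from $x$ to $\infty$) together with $\Theta_j'(x)\to0$ (forced by boundedness of $\Theta_j$, since otherwise $\Theta_j$ would grow linearly) to conclude that this Wronskian vanishes, whence $\Theta_j$ is proportional to $\theta_j(\cdot,0)$.
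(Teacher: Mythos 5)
Your argument is correct and follows essentially the same route as the paper: restriction to each edge, the reduction-of-order solution $\tau_j(x,0)=\theta_j(x,0)\int_{x_0}^x\theta_j(s,0)^{-2}\,ds$ with asymptotics $x(1+o(1))$ to show the bounded solutions form a one-dimensional space spanned by $\theta_j(\cdot,0)$, and the observation that $\Theta_j(x)\to c_j$ rules out a nontrivial $L_2$ solution. Your extra care with the unbounded component $d_j\psi_j$ in the eigenvalue step, and the optional Wronskian alternative, are welcome but do not change the substance.
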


\begin{proof} 
If $\Theta(x,0)$ solves the equation \eqref{tavsan} with $\zeta = 0$, then the restriction of $\Theta(x,0)$ to the edge $e_j$ is a solution of the corresponding zero-energy equation on the half-line for every $1\leq j \leq n.$ As stated above, the function $\theta_j(x,0)$ solves the zero-energy equation and is bounded at infinity by \eqref{behaviorat0}. We note that $\theta_j(x,0)$ is the only solution of the zero-energy equation on the half-line, which is bounded at infinity. Indeed, the solution
$$
\tau_j(x,0) = \theta_j(x,0) \int_{x_0}^x \theta_j(y,0)^{-2} \,dy, \quad x \geq x_0
$$
which is linearly independent of $\theta_j(x,0)$ has as $x \to \infty,$ the asymptotics
$$
\tau_j(x,0) = x +o(x) 
$$
for all $1\leq j\leq n,$ (Lemma 4.3.2, \cite{Y}). Here $x_0$ is an arbitrary point such that $\theta(x) \neq 0$ for $x \geq x_0$.
Finally, equation \eqref{tavsan} cannot have a nontrivial solution belonging to $L_2(\Gamma)$ at infinity as $\Theta_j(x,0) = c_j +o(1)$ for $x \to \infty$.

\end{proof}

\begin{lemma}\label{M}
Let $M:= \# \{ j: \ w_j(0)= 0 \}$.
\begin{enumerate}
\item If $\zeta =0$ is not a resonance, then either $M=0$ and $K(0) \neq 0$ or $M=1$ and $K(\zeta)$ has a pole at $\zeta=0$.
\item Assume that $\zeta=0$ is a resonance of multiplicity $ m\geq 1$, then either\\
a) any resonance function vanishes at the vertex and $m = M-1 \geq 1$, or\\
b) the resonance is of multiplicity one, the corresponding resonance function is non-zero at the vertex and $m=1, \ M=0$ and $K(0) = 0$. 
\end{enumerate}
\end{lemma}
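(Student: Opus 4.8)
The plan is to reduce the statement to a piece of linear algebra at the vertex, using the description of bounded zero–energy solutions from \eqref{T}. By the preceding lemma, every bounded solution $\Theta(\cdot,0)$ of \eqref{tavsan} has the form $\Theta=\bigoplus_{j=1}^n c_j\,\theta_j(\cdot,0)$ for some $(c_1,\dots,c_n)\in\C^n$. Such a $\Theta$ satisfies the continuity condition in \eqref{kbc} precisely when the numbers $c_j w_j(0)$ all coincide, say $c_j w_j(0)=a$ for all $j$ (recall $w_j(0)=\theta_j(0,0)$), and it satisfies the Kirchhoff condition precisely when $\sum_{j=1}^n c_j\,\theta_j'(0,0)=0$. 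Hence the multiplicity $m$ of the resonance equals the dimension of the space of vectors $(c_1,\dots,c_n)\in\C^n$ satisfying these two constraints. I would also record the elementary but crucial fact that $w_j(0)=0$ forces $\theta_j'(0,0)\neq 0$: otherwise $\theta_j(\cdot,0)$ would solve the zero–energy equation with vanishing Cauchy data at the origin and hence vanish identically, contradicting $\theta_j(x,0)\to 1$ as $x\to\infty$ from \eqref{behaviorat0}.

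Next I would split according to the value of $M=\#\{j:w_j(0)=0\}$. If $M=0$, the continuity condition forces $c_j=a/w_j(0)$, and the Kirchhoff condition becomes $a\,K(0)=0$, where $K(0)=\sum_{j=1}^n \theta_j'(0,0)/w_j(0)$ is finite. If $K(0)\neq 0$ then $a=0$, so $\Theta\equiv 0$ and $\zeta=0$ is not a resonance; this is the first alternative of part~(1). If $K(0)=0$ then $a$ is arbitrary, the solution space is one–dimensional, $m=1$, and the (unique up to scalar) resonance function has value $a\neq 0$ at the vertex; this is alternative~(b) of part~(2).

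If $M\geq 1$, relabel so that $w_1(0)=\dots=w_M(0)=0$ and $w_{M+1}(0),\dots,w_n(0)\neq 0$. Taking $j\le M$ in the continuity relation forces the common value $a$ to be $0$; then for $j>M$ the relation $c_j w_j(0)=0$ with $w_j(0)\neq 0$ gives $c_j=0$. The Kirchhoff condition thus reduces to the single scalar equation $\sum_{j=1}^M c_j\,\theta_j'(0,0)=0$, whose coefficients are all non-zero by the fact recorded above; hence its solution space has dimension $M-1$. For $M=1$ this yields $m=0$, so $\zeta=0$ is not a resonance; moreover the $j=1$ summand of $K$, namely $\theta_1'(0,\zeta)/w_1(\zeta)$, behaves like $\theta_1'(0,0)/(-i w_0^{(1)}\zeta)$ as $\zeta\to 0$ by \eqref{loww}, while the remaining summands stay bounded, so $K$ has a simple pole at $\zeta=0$; this is the second alternative of part~(1). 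For $M\geq 2$ we get $m=M-1\geq 1$, and since $a=0$ every resonance function vanishes at the vertex; this is alternative~(a) of part~(2). Collecting cases, the dichotomy ``not a resonance'' versus ``resonance of multiplicity $m\ge1$'' is exactly ``$M=0$ with $K(0)\neq0$, or $M=1$'' versus ``$M=0$ with $K(0)=0$, or $M\ge2$''.

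The only points that are not pure bookkeeping are the translation of the two vertex conditions into the two scalar constraints and the observation that $\theta_j'(0,0)\neq 0$ whenever $w_j(0)=0$; everything else is a dimension count. I expect the main care to be needed in the case $M\geq 1$: one must check that all coefficients in the reduced Kirchhoff equation are genuinely non-zero (so that the count gives exactly $M-1$, in particular forcing $M\ge2$ for a resonance), and one must invoke the low–energy expansion \eqref{loww}, valid under \eqref{firstmoment}, to pin down the pole of $K$ in the borderline case $M=1$.
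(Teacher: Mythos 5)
Your proof is correct and follows essentially the same route as the paper: decompose bounded zero-energy solutions as $\bigoplus_j c_j\theta_j(\cdot,0)$, translate the continuity and Kirchhoff conditions into two scalar constraints on the coefficient vector, and count dimensions case by case in $M$. Your write-up is in fact slightly more complete than the paper's, since you justify that $\theta_j'(0,0)\neq 0$ whenever $w_j(0)=0$ (which is needed to get the count $M-1$ rather than $M$) and you invoke \eqref{loww} to verify the pole of $K(\zeta)$ at $\zeta=0$ in the case $M=1$, two points the paper passes over as obvious.
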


\begin{proof}
We first note that if $M \geq 1$, i.e. $\theta_j(0,0) = 0$ for some $j$, then any resonance function must vanish at $\zeta = 0$ because of the continuity condition.

\begin{enumerate}

\item
 If $M \geq 2$, then it is always possible to construct a zero-energy function by setting $c_j =0 $ if $w_j(0) \neq 0$ and determining the $c_j$'s such that the Kirchhoff vertex condition is fullfilled if $w_j(0) =0$. Hence, if $\zeta =0$ is not a resonance, then necessarily $M \leq 1.$ If $M=1$, then obviously $K(\zeta)$ has a pole at $\zeta = 0$. Moreover, if $M=0$ and $\zeta =0$ is not a resonance, then $K(0) \neq 0$, because if $K(\zeta)$ would vanish in $\zeta = 0$, then it would follow from
\begin{eqnarray}\label{K}
K(0) = \sum_{j=1}^n \frac{\theta'_j(0,0)}{\theta_j(0,0)} = \sum_{j=1}^n \frac{c_j \theta'_j(0,0)}{c_j \theta_j(0,0)} =0
\end{eqnarray}
that the function $\Theta(x,0)$ given in \eqref{T} is a zero-energy resonance function for suitable $c_j$. Hence, $K(0) \neq 0$ if $\zeta =0$ is not a resonance.

\item
a) If $\zeta  = 0$ is a resonance, then $M\neq1$, as it is not possible to construct a resonance function satisfying the vertex conditions and having support on only one edge of $\Gamma$. If $M \geq 2$, then because of the continuity condition any resonance function has to vanish at the vertex. Further, we set $c_j = 0$ for all $j$ with $w_j(0) \neq 0$, then there are $M-1$ linearly independent choices for the remaining $c_j$'s such that the Kirchhoff vertex condition is fullfilled. Hence, the multiplicity of the resonance function is $m=M-1 \geq 1.$\\
b) If $\zeta$ is a resonance with $M=0$, then $\theta_j(0,0) \neq 0$ for all $j$ and the coefficents $c_j$ are determined uniquely by the $n-1$ continuity conditions and the Kirchhoff vertex condition. Further, this implies because of \eqref{K} that $K(0) =0.$
\end{enumerate}
\end{proof}

In the following proposition, we give the low-energy asymptotics for $D(z)$ as $|z| \to 0.$

\begin{proposition}\label{proposition}
Let $m$ be the multiplicity of the resonance $\zeta = 0$, with the convention that $m=0$ if $\zeta = 0$ is not a resonance. If $m=1$, we assume in addition that condition \eqref{secondmoment} is satisfied for all $1 \leq j \leq n$.
Then, as $\zeta \to 0$,
\begin{equation}\label{lowenergyD}
D(z) = c \zeta^{m-1} (1+o(1)),\quad z = \zeta^2,
\end{equation}
with $c \neq 0$.
\end{proposition}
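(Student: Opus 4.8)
The plan is to start from the explicit formula \eqref{Dstar} for the perturbation determinant,
$$
D(z) = \frac{K(z^{1/2})}{inz^{1/2}} \prod_{j=1}^n w_j(z^{1/2}),
$$
and read off the leading behaviour of each factor as $\zeta = z^{1/2} \to 0$, distinguishing the cases catalogued in Lemma \ref{M}. Writing $M = \#\{j : w_j(0)=0\}$, the first observation is that the non-resonant factors $w_j(\zeta)$ with $w_j(0)\neq 0$ contribute a nonzero constant, while each of the $M$ resonant factors contributes, by \eqref{loww}, a factor $-iw_0^{(j)}\zeta + o(\zeta)$ with $w_0^{(j)}\neq 0$; hence $\prod_j w_j(\zeta) = c'\,\zeta^M(1+o(1))$ with $c'\neq 0$. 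It remains to understand the factor $K(\zeta)/\zeta$ near $\zeta=0$ in each of the three scenarios of Lemma \ref{M}, and to combine the powers of $\zeta$.

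The case analysis is then as follows. In case (1) with $M=0$ and $\zeta=0$ not a resonance, Lemma \ref{M} gives $K(0)\neq 0$, so $K(\zeta)/\zeta \sim K(0)\zeta^{-1}$ and $D(z) \sim c\,\zeta^{-1} = c\,\zeta^{m-1}$ with $m=0$, as claimed. In case (1) with $M=1$ and $\zeta=0$ not a resonance, $K(\zeta)$ has a simple pole at $\zeta=0$; I would show it is exactly simple by noting that in $K(\zeta)=\sum_j \theta_j'(0,\zeta)/\theta_j(0,\zeta) = \sum_j \theta_j'(0,\zeta)/w_j(\zeta)$ the single vanishing denominator $w_{j_0}(\zeta)$ behaves like $-iw_0^{(j_0)}\zeta$ while its numerator $\theta_{j_0}'(0,0)\neq 0$ (it cannot vanish, else $\theta_{j_0}(\cdot,0)\equiv 0$). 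Thus $K(\zeta)\sim a\,\zeta^{-1}$, the pole cancels the $\zeta^{M}=\zeta^{1}$ from the product, and $K(\zeta)\zeta^{-1}\prod_j w_j(\zeta) \sim (a\zeta^{-1})\zeta^{-1}(c'\zeta) = c\,\zeta^{-1}$, again $\zeta^{m-1}$ with $m=0$. In case (2a), a resonance with $m = M-1\geq 1$ and every resonance function vanishing at the vertex: here all $M$ vanishing Jost functions have $w_j(0)=0$, and the identity \eqref{K} shows that the numerators $\theta_j'(0,0)$ (with the common factor $c_j$) conspire so that $K(\zeta)$ again has at worst a simple pole; carefully, $K(\zeta) = \sum_j \theta_j'(0,\zeta)/w_j(\zeta)$, and among the $M$ singular terms the leading $\zeta^{-1}$ coefficients sum to $\sum_{j:\,w_j(0)=0}\theta_j'(0,0)/(-iw_0^{(j)})$, which need not vanish, so $K(\zeta)\sim a\zeta^{-1}$ with $a\neq 0$ generically — and in any case $K(\zeta)\zeta^{-1}\prod_j w_j(\zeta)$ has leading order $\zeta^{-1}\cdot\zeta^{-1}\cdot\zeta^{M} = \zeta^{M-2} = \zeta^{m-1}$; the coefficient is nonzero because the product coefficient $c'\neq 0$ and the residue $a\neq 0$. (If $a=0$ one would need to push the expansion one order further, but the correct statement is that the order of vanishing of $D$ at $0$ is $m-1$, matching $M-2$.) Finally, in case (2b), $m=1$, $M=0$, $K(0)=0$: here the product is a nonzero constant, $z^{-1/2}=\zeta^{-1}$, and one must show $K(\zeta) = b\,\zeta + o(\zeta)$ with $b\neq 0$, so that $K(\zeta)\zeta^{-1}\prod_j w_j(\zeta) \to b c' \neq 0 = c\,\zeta^{0} = c\,\zeta^{m-1}$.

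The main obstacle is the last case, (2b): to get $D(z)\to c\neq 0$ one needs the sharp statement $\dot K(0)\neq 0$, i.e. that $K(\zeta)$ has a \emph{simple} (not higher-order) zero at $\zeta=0$. This is exactly where the extra hypothesis \eqref{secondmoment} enters: under the first-moment condition \eqref{firstmoment} alone one only controls $\theta_j(x,\zeta)$ to first order in $\zeta$, and to compute $\dot\theta_j'(0,0)$ — hence $\dot K(0) = \sum_j \bigl(\dot\theta_j'(0,0)\theta_j(0,0) - \theta_j'(0,0)\dot\theta_j(0,0)\bigr)/\theta_j(0,0)^2$ — one needs the second-order (in $\zeta$) behaviour of the Jost solution, which is controlled precisely when $\int (1+x^2)|V_j|\,dx<\infty$. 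I would establish $\dot K(0)\neq 0$ by relating it, via the Wronskian identity $2\zeta u v = (u'\dot v - u\dot v')'$ used already in the proof of Theorem \ref{trfrstar} (applied to $u=v=\theta_j$ and integrated), to the $L_2$-type quantity $\int_0^\infty \Theta(x,0)^2\,dx$ associated with the resonance function — which is positive and finite exactly under \eqref{secondmoment} when the resonance function is nonzero at the vertex — thereby pinning down the sign and nonvanishing of the coefficient $c$. For the remaining cases the argument is bookkeeping with the asymptotics \eqref{wK}, \eqref{behaviorat0}, \eqref{loww} and Lemma \ref{M}, together with the observation that $\theta_j'(0,0)\neq 0$ whenever $\theta_j(0,0)=0$ (uniqueness of solutions of the ODE).
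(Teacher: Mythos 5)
Your skeleton (start from \eqref{Dstar}, factor out $\prod_j w_j(\zeta)=c'\zeta^M(1+o(1))$ via \eqref{loww}, and run the case analysis of Lemma \ref{M}) is the same as the paper's, and your cases $M=0$ non-resonant and $M=1$ non-resonant are fine. But the proof has genuine gaps at exactly the two points where the non-vanishing of $c$ is at stake. In case (2a) with $M\geq 2$ you concede that the residue $a=\sum_{j\colon w_j(0)=0}\theta_j'(0,0)/(-iw_0^{(j)})$ ``need not vanish'' and is nonzero only ``generically,'' and your parenthetical fallback assumes the conclusion. The paper closes this with the identity \eqref{trabzon}, $\dot\theta_j(0,0)\,\theta_j'(0,0)=-i$, which converts the residue into $i\sum_{j\le M}\bigl(\theta_j'(0,0)\bigr)^2$; since the zero-energy Jost solutions are real (real equation, real asymptotics $\theta_j(x,0)\to 1$) and $\theta_j'(0,0)\neq 0$ when $\theta_j(0,0)=0$, this is $i$ times a strictly positive sum, so no cancellation can occur. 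Without \eqref{trabzon} (or an equivalent positivity mechanism) the statement $c\neq 0$ is simply not proved for $M\ge 2$.

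The second gap is case (2b) ($m=1$, $M=0$, $K(0)=0$). You correctly identify that the issue is $\dot K(0)\neq 0$ and that \eqref{secondmoment} must enter, but your proposed route --- relating $\dot K(0)$ to $\int_0^\infty\Theta(x,0)^2\,dx$, ``positive and finite exactly under \eqref{secondmoment}'' --- fails: in this case $\Theta_j(x,0)=c_j\theta_j(x,0)\to c_j\neq 0$ as $x\to\infty$, so that integral diverges no matter how many moments $V$ has. (The Wronskian identity $2\zeta uv=(u'\dot v-u\dot v')'$ in fact shows that $\theta_j'\dot\theta_j-\theta_j\dot\theta_j'$ is \emph{constant} in $x$ at $\zeta=0$; evaluating at infinity gives \eqref{trabzon2}, $\dot\theta_j(0,0)\theta_j'(0,0)-\dot\theta_j'(0,0)\theta_j(0,0)=-i$, with no $L_2$ integral appearing.) The role of \eqref{secondmoment} is to guarantee that $\dot\theta_j(0,0)$ exists when $\theta_j(0,0)\neq 0$, not to make a normalization integral converge. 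With \eqref{trabzon2} one gets $\dot K(0)=\sum_j i\,\theta_j(0,0)^{-2}$, again $i$ times a positive real sum, hence nonzero, and l'H\^opital finishes the case. You should replace your sketch for (2b) by this computation and add the \eqref{trabzon}-based positivity argument to case (2a).
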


For the proof of Proposition \ref{proposition} we shall need the following

\begin{lemma}
Let $H_D = - d^2 / dx^2 +V(x)$ in $L_2([0,\infty))$ be given with Dirichlet boundary condition at the origin and assume that $\int_0^\infty (1+x) |V(x)| \,dx < \infty$. Let $\theta(x,\zeta)$ be the Jost solution on the half-line.
\begin{enumerate}
\item If $\theta(0,0) =0$, then 
\begin{equation}\label{trabzon}
\dot{\theta}(0,0) \theta'(0,0) = -i.
\end{equation}
\item If $\theta(0,0) \neq 0$ and $\int_0^\infty (1+x^2) |V(x)| \,dx < \infty$, then
\begin{equation}\label{trabzon2}
\dot{\theta}(0,0) \theta'(0,0) - \dot{\theta}'(0,0) \theta(0,0) = - i.
\end{equation}
\end{enumerate}  
\end{lemma}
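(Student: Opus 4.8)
The plan is to reduce both identities to the classical Wronskian relation for the half-line Jost solutions and then let $\zeta\to 0$. Since $\theta(\cdot,\zeta)$ and $\theta(\cdot,-\zeta)$ solve the same equation $-u''+Vu=\zeta^2u$ on $[0,\infty)$, their Wronskian is constant in $x$ (Abel); comparing it with the asymptotics $\theta(x,\pm\zeta)e^{\mp i\zeta x}\to 1$ and $\theta'(x,\pm\zeta)e^{\mp i\zeta x}\to\pm i\zeta$ as $x\to+\infty$ (valid under $\int_0^\infty(1+x)|V(x)|\,dx<\infty$, see e.g. Lemma~4.1.4 of \cite{Y}) shows that this constant equals $W[e^{i\zeta x},e^{-i\zeta x}]=-2i\zeta$. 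Evaluating the same constant at $x=0$, and writing $w(\zeta)=\theta(0,\zeta)$, one obtains
\begin{equation}\label{wronski0}
w(\zeta)\,\theta'(0,-\zeta)-w(-\zeta)\,\theta'(0,\zeta)=-2i\zeta,\qquad \zeta\in\R\setminus\{0\}.
\end{equation}
(An alternative, closer in spirit to the proof of Theorem~\ref{trfrstar}, is the identity $\theta'(0,\zeta)\dot\theta(0,\zeta)-\theta(0,\zeta)\dot\theta'(0,\zeta)=-2\zeta\int_0^\infty\theta(x,\zeta)^2\,dx$ obtained by integrating $(u'\dot v-u\dot v')'=2\zeta uv$ over $[0,\infty)$ with $u=v=\theta$; but the explicit right-hand side of \eqref{wronski0} makes the low-energy limit immediate.) The other inputs I need are standard facts about half-line Jost solutions from Chapter~4 of \cite{Y}: under $\int_0^\infty(1+x)|V(x)|\,dx<\infty$ the functions $w$ and $\theta'(0,\cdot)$ extend continuously to $\zeta=0$; under the stronger assumption $\int_0^\infty(1+x^2)|V(x)|\,dx<\infty$ they are continuously differentiable up to $\zeta=0$; and when $w(0)=0$ the asymptotics \eqref{loww} gives in particular $w(\zeta)/\zeta\to\dot w(0)=\dot\theta(0,0)$ as $\zeta\to0$.

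For part (1), where $\theta(0,0)=w(0)=0$, I divide \eqref{wronski0} by $\zeta$ and let $\zeta\to0$ along $\R$. Since $w(0)=0$ we have $w(\zeta)/\zeta\to\dot w(0)$ and $w(-\zeta)/\zeta\to-\dot w(0)$, while $\theta'(0,\pm\zeta)\to\theta'(0,0)$ by continuity; hence the left-hand side tends to $2\dot w(0)\theta'(0,0)$ and the right-hand side to $-2i$. Thus $\dot w(0)\theta'(0,0)=-i$, which, since $\dot w(0)=\dot\theta(0,0)$, is precisely \eqref{trabzon}.

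For part (2), where $\theta(0,0)=w(0)\neq 0$ and $\int_0^\infty(1+x^2)|V(x)|\,dx<\infty$, the functions $w$ and $\theta'(0,\cdot)$ are now $C^1$ near $\zeta=0$, so $\dot\theta'(0,0)$ is well-defined. Both sides of \eqref{wronski0} vanish at $\zeta=0$ (the left side is $w(0)\theta'(0,0)-w(0)\theta'(0,0)$) and agree for $\zeta\neq0$, so I differentiate \eqref{wronski0} at $\zeta=0$; by the product rule, together with $\tfrac{d}{d\zeta}w(-\zeta)\big|_{\zeta=0}=-\dot w(0)$ and $\tfrac{d}{d\zeta}\theta'(0,-\zeta)\big|_{\zeta=0}=-\dot\theta'(0,0)$, the derivative of the left-hand side equals $2\dot w(0)\theta'(0,0)-2w(0)\dot\theta'(0,0)$ and that of the right-hand side equals $-2i$. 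Therefore $\dot w(0)\theta'(0,0)-w(0)\dot\theta'(0,0)=-i$, which is \eqref{trabzon2}.

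Everything of substance is thus reduced to standard facts about the half-line Jost function: the $x\to\infty$ asymptotics that pin down the Wronskian constant in \eqref{wronski0}, the continuity of $w$ and $\theta'(0,\cdot)$ at $\zeta=0$ under the first-moment condition, and their $C^1$-regularity there under the second-moment condition. The extra moment assumed in part (2) is exactly what gives meaning to $\dot\theta'(0,0)$ and justifies differentiating \eqref{wronski0}; in part (1) this term is multiplied by $w(0)=0$ and drops out, which is why the weaker hypothesis suffices. If one insisted on a self-contained proof, the main obstacle would be establishing the $C^1$-smoothness of the Jost function near $\zeta=0$ directly from its Volterra integral equation — classical, but requiring some care with the weights in the kernel.
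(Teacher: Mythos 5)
Your argument is correct, but it takes a genuinely different route from the paper. The paper does not touch the Wronskian of $\theta(\cdot,\zeta)$ and $\theta(\cdot,-\zeta)$ at all: it instead quotes from Yafaev the zero-energy representations of the \emph{regular} solution, namely $\varphi(x,0)=c_0\,\theta(x,0)$ with $\dot\theta(0,0)=-ic_0$ in the case $\theta(0,0)=0$, and $\varphi(x,0)=i\dot\theta(0,0)\theta(x,0)-i\theta(0,0)\dot\theta(x,0)$ in the case $\theta(0,0)\neq 0$ (the latter requiring the second moment), and then simply differentiates in $x$ at $x=0$ and uses the normalization $\varphi'(0,0)=1$. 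Your identity $w(\zeta)\theta'(0,-\zeta)-w(-\zeta)\theta'(0,\zeta)=-2i\zeta$ is in fact the $x$-derivative at $x=0$ of the standard formula $\varphi(x,\zeta)=(2i\zeta)^{-1}\bigl(w(-\zeta)\theta(x,\zeta)-w(\zeta)\theta(x,-\zeta)\bigr)$ combined with $\varphi'(0,\zeta)=1$, so the two proofs are dual to one another: the paper first passes to $\zeta=0$ in the function representation and then differentiates in $x$, whereas you differentiate in $x$ first (fixing the Wronskian constant from the $x\to\infty$ asymptotics) and then pass to $\zeta=0$, by dividing by $\zeta$ in case (1) and by differentiating in $\zeta$ in case (2). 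What your approach buys is transparency about where each moment condition enters --- the $\dot\theta'(0,0)$ term is multiplied by $w(0)$ and drops out exactly when $w(0)=0$ --- at the price of having to invoke the continuity of $w$ and $\theta'(0,\cdot)$ at $\zeta=0$ under the first moment, the expansion \eqref{loww} (equivalently, differentiability of $w$ at $0$ with $\dot w(0)=\dot\theta(0,0)\neq0$) in case (1), and $C^1$-regularity of $w$ and $\theta'(0,\cdot)$ up to $\zeta=0$ under the second moment in case (2); these are the same Chapter-4 facts of Yafaev that underlie the identities the paper cites, so neither proof is more self-contained than the other, but both are complete modulo that standard input.
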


\begin{proof}
If $\theta(0,0) = 0$, then $\dot{\theta}(0,0)$ is defined for $V$ having a first moment and $\dot{\theta}(0,0) = -i c_0,\  c_0 \neq 0$, by \cite{Y}, Proposition 4.3.7. Further, it was shown in \cite{Y} (4.3.11), that 
\begin{equation}\label{dizi1}
\varphi(x,0) = c_0 \theta(x,0),
\end{equation}
where $\varphi$ is the regular solution of $-u'' + Vu = \zeta^2 u$. Taking the derivative with respect to $x$ on both sides of \eqref{dizi1} and setting $x=0$ yields that $1= c_0 \theta'(0,0)$, as the regular solution of the Dirichlet problem was defined by the condition $\varphi'(0,\zeta) =1$. Hence, equation \eqref{trabzon} follows. If $\theta(0,0) \neq 0$, then $\dot{\theta}(0,0)$ is only defined for $V$ having a second moment and
 \begin{equation}\label{dizi2}
 \varphi(x,0) = i \dot{\theta}(0,0) \theta(x,0) - i \theta(0,0) \dot{\theta}(x,0)
 \end{equation}
 by \cite{Y}, Corollary 4.3.11. Again, taking on both sides of \eqref{dizi2} the derivative with respect to $x$ and setting $x=0$ leads to \eqref{trabzon2}. 

\end{proof}

\begin{proof}[Proof of Proposition \ref{proposition}]
We consider the explicit expression for the perturbation determinant $D(z)$ which was given in \eqref{Dstar},
\begin{equation}\label{Dstar2}
D(z) = \frac{  K(\zeta)}{ i n \zeta} \prod_{j=1}^n w_j(\zeta),\quad z= \zeta^2.
\end{equation}
Let us first consider the case $M=0$. Then by Lemma \ref{M} either $\zeta = 0$ is not a resonance and $K(0) \neq 0$ or $\zeta = 0$ is a resonance and $K(0) = 0$. If $\zeta =0$ is not a resonance, then we see from \eqref{Dstar2} that as $\zeta \to 0$, $D(z) = c \zeta^{-1} (1+o(1)), \ c \neq 0.$ If $\zeta =0$ is a resonance, then we consider $\dot{K}(0) = \sum_{j=1}^n \left( \dot{\theta}'_j(0,0) \theta_j(0,0) - \theta_j'(0,0) \dot{\theta}_j(0,0) \right) \theta_j^{-2}(0,0)$ which by \eqref{trabzon2} is the same as $\dot{K}(0) =\sum_{j=1}^n  i \theta_j^{-2}(0,0) \neq 0$. Hence, by applying l'Hospital we have as $\zeta \to 0$, $D(z) \to c \neq 0$. 

Next, we consider case $M \geq 1$. Without loss of generality let $\theta_1(0,0) = \ldots =\theta_M(0,0) = 0$. We rewrite \eqref{Dstar2} as 
\begin{equation}\label{D3}
D(z) = \frac{1}{in \zeta} \left( \sum_{j=1}^M \frac{\theta_j'(0,\zeta)}{\theta_j(0,\zeta)} \prod_{k=1}^n \theta_k(0,\zeta) + \sum_{j=M+1}^n \frac{\theta'_j(0,\zeta)}{\theta_j(0,\zeta)} \prod_{k=1}^n \theta_k(0,\zeta) \right), \ \  z = \zeta^2.
\end{equation}
Obviously, the second term on the right-hand side is $O(\zeta^M)$. In the first term on the right-hand side we have for each $1 \leq j \leq M$, as $\zeta \to 0$
\begin{eqnarray}\nonumber
 \frac{\theta_j'(0,\zeta)}{\theta_j(0,\zeta)} \prod_{k=1}^n \theta_k(0,\zeta) &=& \theta_j'(0,\zeta)\prod_{\substack{k = 1 \\ k \neq j}}^n \theta_k(0,\zeta) \\ \nonumber
 &=&  \zeta^{M-1}  \theta_j'(0,0) \prod_{\substack{k= 1 \\ k \neq j}}^M  \dot{\theta}_k(0,0) \prod_{k=M+1}^n \theta_k(0,0) + O(\zeta^M) \\ \nonumber
&=& \zeta^{M-1}  \frac{\theta_j'(0,0)}{\dot{\theta}_j(0,0)} \prod_{k = 1}^M \dot{\theta}_k(0,0) \prod_{k=M+1}^n \theta_k(0,0)  + O(\zeta^M).
\end{eqnarray}
In view of \eqref{trabzon} and \eqref{D3} we arrive at
\begin{eqnarray}\nonumber
D(z) &=&\frac{ \zeta^{M-1} }{in\zeta} \left( \prod_{k=1}^M \dot{\theta}_k(0,0)  \prod_{k=M+1}^n \theta_k(0,0) i \sum_{j=1}^M (\theta_j'(0,0))^2 + O(\zeta) \right)\\ \nonumber
& =& c \zeta^{M-2} + O(\zeta^{M-1}), \quad c \neq 0,\ z = \zeta^2.
\end{eqnarray}

\end{proof}
 
In the remaining part we prove an analog of Levinson's formula for star shaped graphs. 
For $k \in \R$, we set $D(k^2) =  a(k) e^{i\eta(k)}$, where $a(k) = |D(k^2)|$. Then it follows from the representation $D(k^2) = \left( K(k)/(ink) \right) \prod_{j=1}^n w_j(k)$ that 
\begin{equation}\label{symmetrieeta}
- \eta(k) = \eta(-k).
\end{equation}
Indeed, it follows from the uniqueness of the Jost solutions $\theta_j(x, \zeta)$ that $\overline{\theta_j(x,k)} = \theta_j(x,-k),$ and $\overline{\theta'_j(x,k)} = \theta'_j(x,-k)$, and hence also $\overline{w_j(k)} = \overline{\theta_j(0,k) } = w_j(-k)$. 

\begin{proof}[Proof of Theorem \ref{levinsonstar}]
First, we note that there is a spectral theoretical result relating the zeros of the perturbation determinant to the eigenfunctions of $H$ as follows. The function $D(\zeta^2)$ has a zero in $\zeta$ of order $r$ if and only if $\zeta^2$ is an eigenvalue of multiplicity $r$ of the operator $H$, \cite{Y2}. Obviously, the zeros of $D(\zeta^2)$ lie on the positive imaginary axis as $H$ is a self-adjoint operator and therefore it may have only real eigenvalues.

We apply the argument principle to the function $D(\zeta^2)$ and the contour $\Gamma_{R,\varepsilon}$ which consists of the half-circles $C_R^+ = \{ |\zeta| =R, \im \zeta \geq 0 \}$ and $C_{\varepsilon}^+ = \{ |\zeta| = \varepsilon , \ \im \zeta \geq 0 \}$ and the intervals $(\varepsilon, R)$ and $(-R,-\varepsilon)$. We choose $R$ and $\varepsilon$ such that all of the $N$ negative eigenvalues of $H$ lie inside the contour $\Gamma_{R,\varepsilon}$. The function $D(\zeta^2)$ is analytic inside and on $\Gamma_{R,\varepsilon}$ as $w_j(\zeta)$ is analytic in the the upper half-plane $\im \zeta >0.$ Thus,
\begin{equation}\label{argumentprinciplestar}
\int_{\Gamma_{R,\varepsilon}} \frac{\frac{d}{d\zeta} D(\zeta^2)}{ D(\zeta^2)} \,d\zeta = 2 \pi i N.
\end{equation}
Remember that $\lim_{  \im \zeta \to \infty } D( \zeta^2 ) = \lim_{ \im \zeta  \to \infty } K(\zeta) / (ni\zeta) = 1 + O(|\zeta|^{-1}).$
Thus, we can fix the branch of the function $\ln D(\zeta^2)$ by the condition $\ln D(\zeta^2) \to 0$ as $\im \zeta \to \infty.$ Then, we have $ \ln D(\zeta^2) = \ln |D(\zeta^2)| +i \mbox{arg}\ D(\zeta^2)$. Equation \eqref{argumentprinciplestar} implies that
\begin{equation}\label{combine3}
\mbox{var}_{\Gamma_{R,\varepsilon}} \mbox{arg}\ D(\zeta^2) = 2 \pi N.
\end{equation}
We define $\eta(0) := \lim_{k \to 0+} \eta(k)$. This limit exists because of asymptotics \eqref{lowenergyD}. It follows with \eqref{symmetrieeta} that,
$$
\mbox{var}_{\Gamma_{R,\varepsilon}} \mbox{arg}\ D(\zeta^2)=2 (\eta(R) - \eta(\varepsilon)) + \mbox{var}_{C^+_R} \mbox{arg}\ D(\zeta^2)+  \mbox{var}_{C^+_{\varepsilon}} \mbox{arg}\ D(\zeta^2).
$$
\\

\vspace*{2.6cm}
\begin{center}
\begin{figure}[h!]
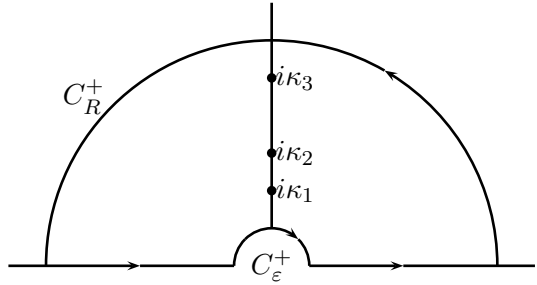

\psset{unit=0.5cm}
\psarc[linewidth=1pt]{-}(0,0){1}{0}{45}
\psarc[linewidth=1pt]{<-}(0,0){1}{45}{135}
\psarc[linewidth=1pt](0,0){1}{135}{180}
\psarc[linewidth=1pt]{->}(0,0){6}{0}{60}
\psarc[linewidth=1pt](0,0){6}{60}{180}
\psline[linewidth=1pt]{->}(1,0)(3.5,0)
\psline[linewidth=1pt]{-}(3.5,0)(7,0)
\psline[linewidth=1pt]{->}(-7,0)(-3.5,0)
\psline[linewidth=1pt]{-}(-3.5,0)(-1,0)
\psline[linewidth=1pt]{-}(0,1)(0,7)
\psdot(0,3)
\psdot(0,2)
\psdot(0,5)
\rput(0.5,2){$\ i\kappa_1$}
\rput(0.5,3){$\ i\kappa_2$}
\rput(0.5,5){$\ i\kappa_3$}
\rput(-5,4.5){$C_R^+$}
\rput(0,0){$C_{\varepsilon}^+$}
\caption{contour of integration $\Gamma_{R,\varepsilon}$}
\end{figure}
\end{center}

\noindent Now, we let $R \to \infty$ and $\varepsilon \to 0.$ Because of \eqref{limitDstar}, $\lim_{R \to \infty} \mbox{var}_{C^+_R} \mbox{arg}\ D(\zeta^2) = 0$. Hence, it follows from \eqref{combine3} that
$$
\eta(\infty) - \eta( \varepsilon) = \pi N - \frac{1}{2} \mbox{var}_{C^+_{\varepsilon}}  \mbox{arg} \ D(\zeta^2).
$$
By Proposition \ref{proposition}, $ \lim_{\varepsilon \to 0} \mbox{var}_{C^+_{\varepsilon}} \mbox{arg} \ D(\zeta^2) = -(m-1) \pi.$
Thus, 
\begin{equation}\label{levistar}
\eta(\infty) - \eta(0) = \pi \left( N + \frac{m-1}{2} \right).
\end{equation}
Note that $\eta(\infty) = 0$ since $\ln D(\zeta^2) \to 0$ as $|\zeta| \to \infty$.
It remains to note that in view of Theorem \ref{meins}, the following identity is true for $\lambda = k^2,\ k>0$,
\begin{equation}\label{levistar2}
\xi(\lambda) = \pi^{-1} \lim_{\varepsilon \to 0+} \mbox{arg} \ D(\lambda +i \varepsilon) =  \pi^{-1} \eta(\lambda^{1/2}),\quad \lambda > 0.
\end{equation}
The assertion of Theorem \ref{levinsonstar} then follows by combining \eqref{levistar} and \eqref{levistar2}. 

\end{proof}

{\bf Acknowledgments}. The author is grateful to Timo Weidl for valuable comments. Many thanks to Rupert Frank for fruitful discussions and references.

\bibliographystyle{plain}
\bibliography{d.bib}
\end{document}